\numberwithin{equation}{section}
\DeclareMathOperator{\RE}{Re}
\theoremstyle{plain}
\newtheorem{theorem}{Theorem}[section]
\newtheorem{corollary}[theorem]{Corollary}
\newtheorem{lemma}{Lemma}[section]
\theoremstyle{definition}
\theoremstyle{remark}
\newtheorem{remark}{Remark}[section]
\begin{document}
\title{ Toeplitz determinants of Logarithmic coefficients for Starlike and Convex functions }
\author{Surya Giri$^{1}$ and S. Sivaprasad Kumar$^*$ }


\date{}


	

\maketitle	
	
\begin{abstract} 
	In this study, we deal with the sharp bounds of certain Toeplitz determinants whose entries are the logarithmic coefficients of analytic univalent functions $f$ such that the quantity $z f'(z)/f(z)$ takes values in a specific domain lying in the right half plane. The established results provide the bounds for the classes of starlike and convex functions, as well as various of their subclasses.
\end{abstract}
\vspace{0.5cm}
	\noindent \textit{Keywords:} Univalent functions; Starlike functions; Convex functions; Logarithmic coefficients; Toeplitz determinants.
	\\
	\noindent \textit{AMS Subject Classification:} 30C45, 30C50.
\maketitle

\section{Introduction}
     Let $\mathcal{A}$ be the class of analytic functions  $f$ defined on the open unit disk $\mathbb{D}= \{ z\in \mathbb{C}: \vert z\vert < 1\}$ with the following Taylor series expansion:
\begin{equation}\label{first}
      f(z) = z+ \sum_{n=2}^\infty a_n z^n .
\end{equation}
     The  subclass of  $ \mathcal{A}$ consisting of all univalent functions is denoted by $\mathcal{S}$.
     Associated with each function $f \in \mathcal{S}$, consider
\begin{equation}\label{LogF}
       F_f(z) =\log \frac{f(z)}{z} = 2 \sum_{n=1}^{\infty} \gamma_n(f) z^n, \quad z \in \mathbb{D}, \quad \log{1}=0.
\end{equation}
     The number $\gamma_n:=\gamma_n(f)$, for each $n=1,2,3,\cdots$, is called  the logarithmic coefficients of $f$.
    Using the idea of logarithmic coefficients, Kayumov~\cite{Kayumov} proved the Brennan's conjecture for the conformal mappings.  Also, logarithmic coefficients play an important role in Milin's conjecture (\cite[p. 155]{Duren}, \cite{Milin}).
      Contrary to the coefficients of $f\in \mathcal{S}$, a little exact information is known about the coefficients of $\log( f(z)/z)$ when $f\in \mathcal{S}$. The Koebe function leads to the natural conjecture $\vert \gamma_n \vert \leq 1/n$, $n\geq 1$ for the class $\mathcal{S}$. However, this is false, even in order of magnitude (see~\cite[Section 8.1]{Duren}). For $f\in \mathcal{S}$, the only known bounds are
     $$  \vert\gamma_1 \vert \leq 1  \;\; \text{and} \;\; \vert \gamma_2 \vert \leq \frac{1}{2} + \frac{1}{e^2}.$$
     The problem of finding the estimates of $\vert \gamma_n\vert$ ($n\geq 3$) for the class $\mathcal{S}$ is still open.
     In past few years, various authors examined the bounds of $\vert \gamma_n \vert$ for functions in the subclasses of $\mathcal{S}$ instead of the whole class (see~\cite{Allu,Logr,ChoL,Thom2,Thom}) and the references cited therein).

     In geometric function theory, the classes of convex and starlike functions are the subclasses of $\mathcal{S}$ that have received the most attention.
    A function $f\in \mathcal{S}$ is said to be convex if $f(\mathbb{D})$ is convex. Let $\mathcal{C}$ denote the class of convex functions. It is well known that, $f\in \mathcal{C}$, if and only if
     $ \RE( ( 1 +  z f''(z))/f'(z)) >0 $ for $z \in \mathbb{D}.$
     A function $f\in \mathcal{S}$ is said to be starlike if $f(\mathbb{D})$ is starlike with respect to the origin. Let $\mathcal{S}^*$ denote the class of starlike functions. Analytically, $f\in \mathcal{S}^*$, if and only if
     $ \RE ({z f'(z)}/{f(z)} )>0$  for $z\in \mathbb{D}.$
     Let $\Omega$ be the class of all Schwarz functions and $\mathcal{P}$ denote  the class of analytic functions $p : \mathbb{D} \rightarrow \mathbb{C}$ such that $p(0)=1$ and $\RE p(z)>0$ for all $z\in \mathbb{D}$.
     An analytic function $f$ is said to be subordinate to the analytic function $g$, if there exists a Schwarz function $\omega$ such that $f(z)= g(\omega(z))$ for all $z \in \mathbb{D}$. It is denoted by $f \prec g$. Ma and Minda~\cite{MaMinda} unified various subclasses of starlike and convex functions. They defined
     $$ \mathcal{S}^*(\varphi) = \bigg\{ f \in \mathcal{S} : \frac{z f'(z)}{f(z)} \prec \varphi(z) \bigg\}$$
     and
     $$ \mathcal{C}(\varphi) = \bigg\{ f \in \mathcal{S} : 1+ \frac{z f''(z)}{f'(z)} \prec \varphi(z) \bigg\}, $$
     where $\varphi(z)$ is an analytic univalent functions with positive real part in $\mathbb{D}$, $\varphi(\mathbb{D})$ is symmetric with respect to the real axis starlike with respect to $\varphi(0)=1$, and $\varphi'(0)>0$.
     Let, for $z \in \mathbb{D}$, $\varphi$ has the series expansion
      $$ \varphi(z) = 1 + B_1 z + B_2 z^2 + B_3 z^3 + \cdots, \quad B_1 >0. $$
      Since $\varphi(\mathbb{D})$ is symmetric about the real axis and $\varphi(0)=1$, therefore all $B_i$'s are real. Further, $\varphi$ is a Carath\'{e}odory function, it follows that $ \vert B_n \vert \leq 2$, $n \in \mathbb{N}$~\cite[page-41]{Duren}.

      If we take $\varphi(z)=(1+ A z)/(1+ B z)$, $-1 \leq B < A \leq 1$, $\mathcal{S}^*(\varphi)$ and $\mathcal{C}(\varphi)$ reduce to the classes of Janowski starlike and convex functions, denoted by $\mathcal{S}^*[A,B]$ and $\mathcal{C}[A,B]$ respectively (see~\cite{Janow}). For $B=-1$ and $A=1-2 \alpha$, ($0 \leq \alpha <1$), the classes $\mathcal{S}^*(\alpha)= \mathcal{S}^*[1 - 2\alpha,-1]$ and $\mathcal{C}(\alpha)=\mathcal{C}[1 - 2\alpha,-1]$ are the well known classes of starlike and convex functions of order $\alpha$ $(0\leq \alpha <1)$ (see~\cite{Duren}).

     Toeplitz matrices and Toeplitz determinants arise in the field of pure as well as applied mathematics~\cite{OTplz}. They occur in  analysis, integral equations, image processing, signal processing, quantum mechanics and among other areas. For more applications, we refer to the survey article~\cite{LHLIM}. Toeplitz matrices contain constant entries along their diagonals. For $f(z)= z+ \sum_{n=2}^\infty a_n z^n \in \mathcal{A}$, the Toeplitz determinant is given by
\begin{equation}\label{intil}
     T_{m,n}(f)= \begin{vmatrix}
	a_n & a_{n+1} & \cdots & a_{n+m-1} \\
	{a}_{n+1} & a_n & \cdots & a_{n+m-2}\\
	\vdots & \vdots & \vdots & \vdots\\
    {a}_{n+m-1} & {a}_{n+m-2} & \cdots & a_n\\
	\end{vmatrix},
\end{equation}
    where $m, n \in \mathbb{N}$. In case of the class $\mathcal{S}^*$ and $\mathcal{C}$, the bound of $\vert T_{2,n}(f) \vert$, $\vert T_{3,1}(f)\vert$ and $\vert T_{3,2}(f)\vert$ were examined by Ali et al.~\cite{AliW} in 2017.
    Motivated by this work, for small values of $m$ and $n$, various authors studied the bounds of $\vert T_{m,n}(f)\vert$ for various subclasses of $\mathcal{S}$  in past few years~\cite{Ahuja,Cudna,Giri,Lecko,Obra}.

    Hankel and Toeplitz matrices are closely related to each other. Hankel matrices contain constant entries along the reverse diagonals. Ye and Lim \cite{LHLIM} showed that any $n \times n$ matrix over $\mathbb{C}$ generically can be written as the product of some Toeplitz matrices or Hankel matrices.
   Recently, Kowalczyk and Lecko~\cite{Kowal} introduced the Hankel determinant whose entries were the logarithmic coefficients of functions in $\mathcal{A}$. They studied the sharp estimates of second order Hankel determinant of logarithmic coefficients for functions belonging to  $\mathcal{S}^*$ and $\mathcal{C}$, which is further generalized for the classes $\mathcal{S}^*(\alpha)$ and $\mathcal{C}(\alpha)$ by the same authors in~\cite{Kowal2}.
   Also, Mundalia and Kumar~\cite{Mundalia} studied the same problem for the certain subclasses of close-to-convex functions.

  Motivated by these works and considering the significance of Toeplitz determinant and logarithmic coefficients, we define
\begin{equation}\label{TplzLog}
     T_{m,n}(\gamma_f)= \begin{vmatrix}
	\gamma_n & \gamma_{n+1} & \cdots & \gamma_{n+m-1} \\
	{\gamma}_{n+1} & \gamma_n & \cdots & \gamma_{n+m-2}\\
	\vdots & \vdots & \vdots & \vdots\\
    {\gamma}_{n+m-1} & {\gamma}_{n+m-2} & \cdots & \gamma_n\\
	\end{vmatrix}.
\end{equation}
    Consequently, we obtain
    $$  T_{2,1}(\gamma_f) = \gamma_1^2 -\gamma_2^2 \quad \text{and} \quad T_{2,2}(\gamma_f)  =  \gamma_2^2 -\gamma_3^2.  $$
    A comparison of same powers of $z$ in (\ref{LogF}) yields that
\begin{equation}\label{gammas}
    \gamma_1 = \frac{a_2}{2},\quad \gamma_2 = \frac{1}{4}(2 a_3 - a_2^2)\; \text{and} \; \gamma_3 = \frac{1}{2} \bigg( a_4 - a_2 a_3 + \frac{1}{3} a_2^3 \bigg).
\end{equation}
    In this paper, we derive the sharp estimates of $ \vert T_{2,1}(\gamma_f) \vert$, $ \vert T_{2,2}(\gamma_f) \vert$ and $\vert T_{3,2}(f)\vert$ for the classes $\mathcal{S}^*(\varphi)$ and $\mathcal{C}(\varphi)$. The established bounds lead to a number of new and already known results for different subclasses of starlike and convex functions when $\varphi$ is appropriately chosen.

    The following lemmas are required to prove the main results.
\begin{lemma}\cite{Prokh}\label{Old}
    If $\omega(z) = \sum_{n=1}^\infty c_n z^n \in \Omega$ and $(\mu , \nu) \in \cup_{i=1}^3 D_i$, then
    $$ \vert c_3 + \mu c_1 c_2 + \nu c_1^3 \vert \leq \vert \nu \vert ,$$
    where
\begin{align*}
     D_1 &= \bigg\{ (\mu, \nu) : \vert \mu \vert \leq 2, \; \nu \geq 1 \bigg\}, \;\;  D_2 = \bigg\{ (\mu, \nu) : 2 \leq \vert \mu \vert \leq 4, \; \nu \geq \frac{1}{12} (\mu^2 + 8) \bigg\},
\end{align*}
   and
   $$  D_3 = \bigg\{ (\mu, \nu) : \vert \mu \vert \geq 4, \; \nu \geq \frac{2}{3} (\vert \mu\vert - 1) \bigg\}. $$
\end{lemma}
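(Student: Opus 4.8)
This is the Prokhorov--Szynal coefficient estimate, and the route I would take is to reduce it to a finite-dimensional extremal problem by rotation together with the Schur parametrization of a Schwarz function, the genuine difficulty being the resulting trigonometric optimization. First I would normalize $c_1$. The functional is rotation covariant: replacing $\omega(z)$ by $\omega(e^{i\theta}z)$ sends $c_n\mapsto c_ne^{in\theta}$, whence
\[
c_3+\mu c_1c_2+\nu c_1^3 \longmapsto e^{3i\theta}\big(c_3+\mu c_1c_2+\nu c_1^3\big),
\]
so its modulus is unchanged and I may assume $c_1=x\in[0,1]$.

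Next I would invoke the standard parametrization: for $\omega\in\Omega$ there exist $\zeta,\eta\in\overline{\mathbb{D}}$ with
\[
c_2=(1-x^2)\zeta,\qquad c_3=(1-x^2)(1-|\zeta|^2)\eta-x(1-x^2)\zeta^2 .
\]
Collecting the terms free of $\eta$ into $b:=\nu x^3+\mu x(1-x^2)\zeta-x(1-x^2)\zeta^2$, the functional becomes $(1-x^2)(1-|\zeta|^2)\eta+b$. Since $\eta$ occurs linearly and ranges freely over $\overline{\mathbb{D}}$, maximizing over it yields
\[
\big|c_3+\mu c_1c_2+\nu c_1^3\big|\le (1-x^2)(1-|\zeta|^2)+|b| .
\]

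Writing $\zeta=te^{i\phi}$ with $t\in[0,1]$, the crucial point is that one must \emph{not} apply the crude triangle bound $|b|\le\nu x^3+|\mu|x(1-x^2)t+x(1-x^2)t^2$: this is strictly lossy and already overshoots $|\nu|$ inside $D_1$ (e.g.\ at $x=\tfrac{3}{10}$, $|\mu|=2$, $\nu=1$). Instead I would compute $|b|^2$ exactly; as $b$ is a combination of $1,e^{i\phi}$ and $e^{2i\phi}$ with real coefficients, $|b|^2$ is a quadratic polynomial in $\cos\phi$, whose maximum over $\cos\phi\in[-1,1]$ is attained either at an endpoint or at the vertex. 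Substituting this sharp value of $\max_\phi|b|$ reduces the claim to a concrete two-variable inequality $\Psi(x,t)\le\nu$ on $[0,1]^2$ (note $\nu>0$, hence $|\nu|=\nu$, in each $D_i$).

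I expect this last optimization to be the main obstacle. The admissible configuration of the three maximizers --- the vertex-versus-endpoint alternative for $\cos\phi$, the interior critical point (of the form $t^\star\sim|\mu|x/(1-x)$) versus the endpoint $t=1$, and the monotonicity in $x$ --- switches according to the magnitude of $|\mu|$, and faithfully tracking these alternatives is exactly what produces the three thresholds $|\mu|\le2$, $2\le|\mu|\le4$, $|\mu|\ge4$ together with the accompanying lower bounds on $\nu$; these are precisely the regions $D_1,D_2,D_3$. Sharpness then needs no extra work: taking $\omega(z)=z$ gives $c_1=1$ and $c_2=c_3=0$, so the functional equals $\nu=|\nu|$, and the bound is attained (by a rotation of the identity) throughout $\bigcup_{i=1}^{3}D_i$.
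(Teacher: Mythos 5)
First, a point of reference: the paper does not prove this lemma at all --- it is quoted verbatim from Prokhorov and Szynal \cite{Prokh} --- so the only proof to compare against is the original one, which in fact follows exactly the reduction you describe (rotation, Schur-type representation of $c_2$ and $c_3$, elimination of the linear parameter). Up to the point where you stop, your work is sound: the rotation covariance, the representation $c_2=(1-x^2)\zeta$, $c_3=(1-x^2)(1-|\zeta|^2)\eta-x(1-x^2)\zeta^2$, the exact reduction to bounding $(1-x^2)(1-t^2)+\max_\phi|b|$, and your observation that the crude triangle bound on $|b|$ is genuinely lossy inside $D_1$ (your numerical instance checks out) are all correct, and the latter shows real insight into why the problem is delicate.

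The genuine gap is that your argument stops exactly where the lemma begins. Everything you have written is exact bookkeeping valid for \emph{every} $(\mu,\nu)\in\mathbb{R}^2$; no property of $D_1$, $D_2$, $D_3$ has been used anywhere. The entire content of the statement is the verification that the two-variable maximum you call $\Psi(x,t)$ satisfies $\max_{x,t\in[0,1]}\Psi(x,t)\le\nu$ precisely under the stated hypotheses, and this you do not carry out: you say you \emph{expect} the vertex-versus-endpoint and interior-versus-boundary alternatives to produce the thresholds $|\mu|\le 2$, $2\le|\mu|\le 4$, $|\mu|\ge 4$ with the accompanying lower bounds on $\nu$, but expecting is not deriving. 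Nothing in the proposal shows, for instance, why $\nu\ge 1$ suffices when $|\mu|\le 2$, or where the curve $\nu=\tfrac{1}{12}(\mu^2+8)$ comes from in the middle range; establishing these requires locating the maximum of the quadratic in $\cos\phi$, then maximizing in $(t,x)$, with the case structure switching across the region boundaries --- this is several pages of computation in \cite{Prokh}, where moreover the sharp bound is determined on the whole $(\mu,\nu)$-plane and $D_1\cup D_2\cup D_3$ is merely the portion on which that bound equals $|\nu|$. As submitted, the proposal is a correct plan --- indeed the same plan as the original proof --- but not a proof; the only part of the endgame actually completed is the (easy, and strictly speaking not required by the statement) sharpness remark that $\omega(z)=z$ attains the value $|\nu|$.
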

\begin{lemma}\cite[Theorem 1]{Efraim}\label{lemmaa}
    Let $p(z) = 1+ \sum_{n=1}^\infty p_n z^n \in \mathcal{P}$ and $\mu \in \mathbb{C}$. Then
    $$ \vert p_n - \mu p_k p_{n-k} \vert \leq 2 \max \{ 1, \vert 2 \mu -1 \vert \}, \quad 1 \leq k \leq n-1 . $$
    The inequality is sharp for the function $p(z) = (1+z)/(1-z)$ or its rotation when $\vert 2 \mu -1 \vert \geq 1$. In case of $\vert 2 \mu -1 \vert < 1$, the inequality is sharp for $p(z) =(1+z^n)/(1-z^n)$ or its rotations.
\end{lemma}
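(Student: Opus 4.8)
The plan is to convert the coefficient functional into a statement about moments and then recover cancellation through a symmetrization. Since $p\in\mathcal{P}$, the Herglotz representation gives a probability measure $\sigma$ on the unit circle $\mathbb{T}$ with $p(z)=\int_{\mathbb{T}}\frac{1+\zeta z}{1-\zeta z}\,d\sigma(\zeta)$, whence $p_j=2t_j$ for $t_j:=\int_{\mathbb{T}}\zeta^{j}\,d\sigma(\zeta)$, and $|t_j|\le 1$. The inequality to prove is therefore equivalent to $|t_n-2\mu\,t_k t_{n-k}|\le\max\{1,|2\mu-1|\}$, and I would establish this uniformly in $n$ and $k$, with no recursion.

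First I would split off the part that is linear in $\mu$ by writing $t_n-2\mu\,t_k t_{n-k}=(t_n-t_k t_{n-k})-(2\mu-1)\,t_k t_{n-k}$ and applying the triangle inequality, so that $|t_n-2\mu\,t_k t_{n-k}|\le |t_n-t_k t_{n-k}|+|2\mu-1|\,|t_k|\,|t_{n-k}|$. The genuinely delicate term is $t_n-t_k t_{n-k}$, and here is the main obstacle: estimating it by $|t_n|\le1$ termwise, or by pulling the modulus through a single integral, is far too lossy (the relevant pointwise integrand can reach modulus $3$ while the target bound is $2$), because such estimates ignore the cancellation forced by using one and the same measure $\sigma$. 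The device that captures this cancellation is the symmetrized identity
\[
t_n-t_k t_{n-k}=\tfrac12\int_{\mathbb{T}}\int_{\mathbb{T}}(\zeta^{k}-\eta^{k})(\zeta^{n-k}-\eta^{n-k})\,d\sigma(\zeta)\,d\sigma(\eta),
\]
obtained by expanding the product and using $\int_{\mathbb{T}}d\sigma=1$. Cauchy--Schwarz on $L^{2}(\sigma\times\sigma)$ together with the elementary computation $\int_{\mathbb{T}}\int_{\mathbb{T}}|\zeta^{k}-\eta^{k}|^{2}\,d\sigma\,d\sigma=2(1-|t_k|^{2})$ then yields $|t_n-t_k t_{n-k}|\le\sqrt{(1-|t_k|^{2})(1-|t_{n-k}|^{2})}$.

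Combining the two bounds leaves the elementary optimization of $\sqrt{(1-x^{2})(1-y^{2})}+R\,xy$ over $x=|t_k|,\,y=|t_{n-k}|\in[0,1]$, where $R:=|2\mu-1|$. Substituting $x=\cos\alpha$, $y=\cos\beta$ turns this into $\sin\alpha\sin\beta+R\cos\alpha\cos\beta$, which is at most $\cos(\alpha-\beta)\le1$ when $R\le1$ and at most $R\cos(\alpha-\beta)\le R$ when $R\ge1$; in both cases the value is bounded by $\max\{1,R\}$, exactly the claimed estimate. Finally I would extract sharpness from the equality cases of this optimization: $x=y=1$ forces $\sigma$ to be a point mass and hence $p$ a rotation of $(1+z)/(1-z)$ (the extremal when $|2\mu-1|\ge1$), while $x=y=0$ with $|t_n|=1$ is realized by $(1+z^{n})/(1-z^{n})$ and its rotations (the extremal when $|2\mu-1|<1$). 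The only real work is the symmetrization-plus-Cauchy--Schwarz step; everything else is bookkeeping and a one-line trigonometric estimate.
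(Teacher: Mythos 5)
Your proof is correct, but there is nothing in the paper to compare it against: the paper does not prove Lemma~\ref{lemmaa} at all; it imports the statement from Efraimidis \cite[Theorem 1]{Efraim} as a ready-made tool. So your argument should be judged as a self-contained replacement for that citation, and as such it works. Every step checks out: the Herglotz representation gives $p_j=2t_j$ with $t_j=\int_{\mathbb{T}}\zeta^j\,d\sigma(\zeta)$ and reduces the claim to $\vert t_n-2\mu t_kt_{n-k}\vert\le\max\{1,\vert 2\mu-1\vert\}$; splitting at $\mu=\tfrac12$ isolates the $\mu$-dependence in the harmless term $(2\mu-1)t_kt_{n-k}$; the symmetrization identity is verified by expanding $(\zeta^k-\eta^k)(\zeta^{n-k}-\eta^{n-k})$ and using that $\sigma$ is a probability measure; Cauchy--Schwarz together with $\int_{\mathbb{T}}\int_{\mathbb{T}}\vert\zeta^k-\eta^k\vert^2\,d\sigma\,d\sigma=2(1-\vert t_k\vert^2)$ yields $\vert t_n-t_kt_{n-k}\vert\le\sqrt{(1-\vert t_k\vert^2)(1-\vert t_{n-k}\vert^2)}$, which is the genuine content of the argument (a Livingston-type inequality, stronger than what is needed); and the estimate $\sin\alpha\sin\beta+R\cos\alpha\cos\beta\le\max\{1,R\}$ for $\alpha,\beta\in[0,\pi/2]$ is elementary and correct. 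For sharpness, the cleanest route is direct verification rather than appealing to the equality analysis of your optimization: for $(1+z)/(1-z)$ all $p_j=2$, so $\vert p_n-\mu p_kp_{n-k}\vert=2\vert 2\mu-1\vert$, while for $(1+z^n)/(1-z^n)$ one has $p_k=p_{n-k}=0$ and $p_n=2$, so the functional equals $2$; both match the claimed bound in the respective ranges of $\vert 2\mu-1\vert$. What your approach buys is transparency about where the constant comes from (the pivot at $\mu=\tfrac12$ and the two-point cancellation enforced by using a single measure); what the paper's approach buys is only brevity, since it delegates the entire proof to the cited source.
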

\section{Main results}
    We begin with the bounds of $\vert T_{2,1}(\gamma_f) \vert$ and $\vert T_{2,2}(\gamma_f) \vert$ for the classes $\mathcal{S}^*(\varphi)$  and $\mathcal{C}(\varphi)$.
\begin{theorem}\label{thm1}
    Let $\varphi(z) = 1+ B_1 z +B_2 z^2 +B_3 z^3 +\cdots $ and $f\in \mathcal{S}^*(\varphi)$. If $ \vert B_2 \vert \geq B_1$, then
   $$  \vert \gamma_1^2 - \gamma_2^2 \vert \leq \frac{B_1^2}{4} + \frac{B_2^2}{16}. $$
    The estimate is sharp.
\end{theorem}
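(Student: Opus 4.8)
The plan is to realize the subordination $zf'(z)/f(z)\prec\varphi$ through a Schwarz function, convert to a Carath\'eodory function so that Lemma~\ref{lemmaa} becomes directly applicable, and then reduce the estimate of $|\gamma_1^2-\gamma_2^2|$ to the two independent bounds for $|\gamma_1|$ and $|\gamma_2|$. First I would record the coefficient relations. Writing $zf'(z)/f(z)=\varphi(\omega(z))$ for a Schwarz function $\omega(z)=c_1z+c_2z^2+\cdots\in\Omega$ and comparing coefficients, one finds $a_2=B_1c_1$ and $2a_3-a_2^2=B_1c_2+B_2c_1^2$, so by \eqref{gammas} $\gamma_1=B_1c_1/2$ and $\gamma_2=\tfrac14(B_1c_2+B_2c_1^2)$. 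Substituting $\omega=(p-1)/(p+1)$ with $p(z)=1+p_1z+p_2z^2+\cdots\in\mathcal{P}$, i.e. $c_1=p_1/2$ and $c_2=\tfrac12\!\left(p_2-p_1^2/2\right)$, I obtain
$$ \gamma_1=\frac{B_1 p_1}{4},\qquad \gamma_2=\frac{B_1}{8}\left(p_2-\mu\,p_1^2\right),\qquad \mu=\frac{B_1-B_2}{2B_1}. $$
This fixes the structure of the problem; everything that follows is estimation.

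Next I would bound the two factors separately. The classical estimate $|p_1|\le 2$ gives $|\gamma_1|^2\le B_1^2/4$. For $\gamma_2$, Lemma~\ref{lemmaa} with $n=2$, $k=1$ yields $|p_2-\mu p_1^2|\le 2\max\{1,|2\mu-1|\}$, and since $2\mu-1=-B_2/B_1$ this reads $|\gamma_2|\le\tfrac{B_1}{4}\max\{1,|B_2|/B_1\}$. This is precisely where the hypothesis is used: when $|B_2|\ge B_1$ the maximum equals $|B_2|/B_1$, so $|\gamma_2|^2\le B_2^2/16$ (whereas for $|B_2|<B_1$ the branch $\max=1$ would produce the different quantity $B_1^2/16$). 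The triangle inequality $|\gamma_1^2-\gamma_2^2|\le|\gamma_1|^2+|\gamma_2|^2$ then delivers the asserted bound $B_1^2/4+B_2^2/16$.

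The step needing the most care is sharpness, since the final estimate chains three inequalities — the two modulus bounds and the triangle inequality — and all three must be equalities at once. I expect the extremizer to be the rotation $p(z)=(1+iz)/(1-iz)$, i.e. $\omega(z)=iz$ and the function $f$ determined by $zf'(z)/f(z)=\varphi(iz)$. For this $p$ one has $p_1=2i$, $p_2=-2$, so $|p_1|=2$ saturates the bound for $\gamma_1$; moreover $p_2-\mu p_1^2=-2B_2/B_1$, and under $|B_2|\ge B_1$ (hence $|2\mu-1|\ge 1$) this saturates Lemma~\ref{lemmaa}, whose extremal family is exactly the rotations of $(1+z)/(1-z)$ rather than $(1+z^2)/(1-z^2)$. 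The particular choice $\lambda=i$ among these rotations is forced by the outer triangle inequality: it makes $\gamma_1^2=-B_1^2/4$ and $\gamma_2^2=B_2^2/16$ real and of opposite sign, so that $|\gamma_1^2-\gamma_2^2|=|\gamma_1|^2+|\gamma_2|^2$. Verifying these three simultaneous equalities, and confirming that $|B_2|\ge B_1$ is exactly the condition selecting the correct extremal branch of Lemma~\ref{lemmaa}, is the crux of the argument.

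As a consistency check I would also note that the same bound can be obtained directly from the Schwarz-coefficient inequalities $|c_1|\le 1$ and $|c_2|\le 1-|c_1|^2$: writing $s=|c_1|^2\in[0,1]$, the triangle inequality gives $|\gamma_1^2-\gamma_2^2|\le \tfrac{B_1^2}{4}s+\tfrac1{16}\bigl(B_1(1-s)+|B_2|s\bigr)^2$, a convex function of $s$ whose maximum on $[0,1]$ occurs at $s=1$ and equals $B_1^2/4+B_2^2/16$. This alternative confirms the bound and re-exhibits the extremal $c_1=i,\ c_2=0$, but the Lemma~\ref{lemmaa} route is the cleaner one and is the approach I would present.
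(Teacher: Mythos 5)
Your proof is correct, and its skeleton coincides with the paper's: both decompose $|\gamma_1^2-\gamma_2^2|\le|\gamma_1|^2+|\gamma_2|^2$ (the paper writes this as $\tfrac14\bigl(|a_2|^2+|a_3-a_2^2/2|^2\bigr)$, which is the same thing since $\gamma_1=a_2/2$ and $\gamma_2=\tfrac12(a_3-a_2^2/2)$), bound the two pieces separately, and exhibit sharpness with the same extremal function, namely $\omega(z)=iz$, i.e.\ the function $k_\varphi$ with $zk_\varphi'(z)/k_\varphi(z)=\varphi(iz)$. The one substantive difference is the source of the bound $|\gamma_2|\le|B_2|/4$: the paper quotes the Fekete--Szeg\"o theorem of Ali et al.\ \cite{AliFS} at $\lambda=1/2$, where $|B_2|\ge B_1$ forces the outer cases and yields $|a_3-\tfrac12 a_2^2|\le|B_2|/2$; you instead pass to Carath\'eodory coefficients and apply Lemma~\ref{lemmaa} with $\mu=(B_1-B_2)/(2B_1)$, so that $|2\mu-1|=|B_2|/B_1$ and the hypothesis selects the branch $\max\{1,|B_2|/B_1\}=|B_2|/B_1$. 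These are interchangeable derivations of the same estimate, both invoking the hypothesis at exactly the same spot, and your sharpness verification (simultaneous equality in all three inequalities at $p_1=2i$, $p_2=-2$, giving $\gamma_1=iB_1/2$, $\gamma_2=-B_2/4$ with $\gamma_1^2$, $\gamma_2^2$ real of opposite signs) matches the paper's computation for $k_\varphi$.

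One point deserves emphasis: your closing ``consistency check'' is actually the strongest argument you give. Using only $|c_1|\le 1$, $|c_2|\le 1-|c_1|^2$, and convexity in $s=|c_1|^2$, it proves $|\gamma_1^2-\gamma_2^2|\le B_1^2/4+B_2^2/16$ with \emph{no} assumption on $B_2$ at all (the endpoint comparison $g(1)-g(0)=\tfrac{3B_1^2}{16}+\tfrac{B_2^2}{16}>0$ never uses $|B_2|\ge B_1$), and $k_\varphi$ attains this value unconditionally. So the hypothesis $|B_2|\ge B_1$ is an artifact of the Fekete--Szeg\"o/Efraimidis route rather than of the result itself; promoted from a remark to the main proof, that paragraph would yield a hypothesis-free and strictly more general version of Theorem~\ref{thm1}.
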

\begin{proof}
     Let $f \in \mathcal{S}^*(\varphi)$ be of the form (\ref{first}). Then there exists a Schwarz function, say $\omega(z) = \sum_{n=1}^\infty c_n z^n $ such that
\begin{equation}\label{phiw}
    \frac{z f'(z)}{f(z)} = \varphi(\omega(z)) , \quad  z \in \mathbb{D}.
\end{equation}
    From the Taylor series expansions of $f$ and $\varphi$, we obtain
\begin{equation}\label{phiw1}
      \frac{z f'(z)}{f(z)} = 1 + a_2 z + (-a_2^2 + 2 a_3) z^2 + (a_2^3 - 3 a_2 a_3 + 3 a_4) z^3 + \cdots
\end{equation}
     and
\begin{equation}\label{phiw2}
    \varphi(\omega(z)) =  1 + B_1 c_1 z + (B_2 c_1^2 + B_1 c_2) z^2 + (B_3 c_1^3 + 2 B_2 c_1 c_2 + B_1 c_3) z^3 + \cdots.
\end{equation}
    By comparing the same powers in (\ref{phiw}) using (\ref{phiw1}) and (\ref{phiw2}), coefficients $a_2$, $a_3$ and $a_4$ can be expressed as
\begin{equation}\label{a2a3}
    a_2 = B_1 c_1, \;\;  a_3 = \frac{1}{2} (B_1^2 c_1^2 + B_2 c_1^2 + B_1 c_2 )
\end{equation}
   and
\begin{equation}\label{a4}
    a_4 =  \frac{1}{48} ( (8 B_1^3 + 24 B_1 B_2  + 16 B_3 )c_1^3 + ( 24 B_1^2  + 32 B_2) c_1 c_2 + 16 B_1 c_3).
\end{equation}
    Further, applying $\vert c_n \vert \leq 1$, we get
\begin{equation}\label{a2}
    \vert a_2 \vert \leq B_1.
\end{equation}
    Ali et al. \cite[Theorem 1]{AliFS} established the bound of Fekete-Szeg\"{o} functional for $p-$valent functions, which for $p=1$ gives
\begin{equation*}\label{FSSPHI}
\begin{aligned}
\vert a_3 - \lambda a_2^2 \vert \leq
\left\{
\begin{array}{ll}
    \dfrac{1}{2}( B_1^2 + B_2 - 2 \lambda B_1^2 ), & \text{if} \;\; 2 \lambda B_1^2 \leq B_1^2 + B_2 - B_1; \\ \\
    \dfrac{1}{2}B_1,                      & \text{if} \;\;  B_1^2 + B_2 - B_1 \leq 2 \lambda B_1^2    \leq  B_1^2 + B_2 + B_1 ; \\ \\
    \dfrac{1}{2} ( - B_1^2 - B_2 + 2 \lambda B_1^2 ),  & \text{if} \;\;  2 \lambda B_1^2 \geq B_1^2 + B_2 + B_1.
\end{array}
\right.
\end{aligned}
\end{equation*}
   Since $\vert B_2\vert  \geq B_1$, hence the above inequality directly yields
\begin{equation}\label{FSBS}
    \vert a_3 - \frac{1}{2} a_2^2 \vert \leq \frac{\vert B_2\vert}{2}.
\end{equation}
   From (\ref{gammas}), we obtain
\begin{equation}\label{relat}
    \vert \gamma_1^2 - \gamma_{2}^2 \vert =  \bigg\vert \frac{1}{4} \left( a_2^2 - \left( a_3 - \frac{a_2^2}{2} \right)^2 \right) \bigg\vert
                              \leq \frac{1}{4} \bigg( \vert a_2 \vert^2 + \bigg\vert a_3 - \frac{a_2^2}{2} \bigg\vert^2 \bigg).
\end{equation}
   The required bound follows from (\ref{relat}) by using the bounds of $\vert a_2 \vert $ and $\vert a_3 -  (a_2^2)/2 \vert$ from (\ref{a2}) and (\ref{FSBS}) respectively.

   To show the sharpness of the bound, consider the analytic function $k_\varphi: \mathbb{D}\rightarrow \mathbb{C}$ given by
\begin{equation}\label{extS}
    k_\varphi(z) =  z \exp \int_0^z \frac{\varphi(i t)-1}{t} dt = z + i B_1 z^2 - \frac{1}{2}(B_1^2 + B_2 ) z^3+ \cdots.
\end{equation}
   Clearly, $k_\varphi \in \mathcal{S}^*(\varphi)$ and for this function, a simple computation gives
   $$ \vert \gamma_1^2 - \gamma_2^2 \vert =  \frac{4 B_1^2 + B_2^2}{16} ,$$
   which shows that the bound is sharp.
\end{proof}
\begin{theorem}\label{thm2}
    Let $\varphi(z) = 1+ B_1 z +B_2 z^2 +B_3 z^3 +\cdots $ and $f\in \mathcal{C}(\varphi)$. If $ \vert  B_2 + \frac{1}{4} B_1^2 \vert \geq B_1 $, then
\begin{equation}\label{finalb}
    \vert \gamma_1^2 - \gamma_2^2 \vert \leq \frac{B_1^2}{16} + \frac{1}{144} \bigg(B_2 + \frac{B_1^2}{4} \bigg)^2.
\end{equation}
    The estimate is sharp.
\end{theorem}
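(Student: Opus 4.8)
The plan is to follow exactly the route of Theorem~\ref{thm1}, exploiting that the algebraic identity
$\gamma_1^2-\gamma_2^2=\tfrac14\big(a_2^2-(a_3-\tfrac{a_2^2}{2})^2\big)$
coming from (\ref{gammas}) is class-independent, so that once more
$|\gamma_1^2-\gamma_2^2|\le \tfrac14\big(|a_2|^2+|a_3-\tfrac{a_2^2}{2}|^2\big)$;
only the Schwarz-function representation of the coefficients changes. The first step is therefore to record $a_2$ and $a_3$ for $f\in\mathcal C(\varphi)$. The cleanest device is the Alexander relation: writing $g(z)=zf'(z)=z+\sum_{n\ge2}na_nz^n$, one has $1+zf''/f'=zg'/g$, so $f\in\mathcal C(\varphi)$ if and only if $g\in\mathcal S^*(\varphi)$. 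Substituting $b_n=na_n$ into (\ref{a2a3}) then gives $a_2=\tfrac12 B_1c_1$ and $a_3=\tfrac16(B_1^2c_1^2+B_2c_1^2+B_1c_2)$ (the same relations of course follow by comparing coefficients directly in $1+zf''/f'=\varphi(\omega)$).

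From $|c_1|\le1$ I immediately obtain $|a_2|\le B_1/2$, which will produce the first term $B_1^2/16$ of (\ref{finalb}). For the second term the key manipulation is to write $a_3-\tfrac{a_2^2}{2}=\tfrac{B_1}{6}\big(c_2+\lambda c_1^2\big)$ with $\lambda=\tfrac{B_1}{4}+\tfrac{B_2}{B_1}$, and then apply the elementary Schwarz-coefficient estimate $|c_2+\lambda c_1^2|\le\max\{1,|\lambda|\}$, a consequence of $|c_2|\le 1-|c_1|^2$. Since $|\lambda|=\tfrac1{B_1}|B_2+\tfrac14B_1^2|$, the hypothesis $|B_2+\tfrac14B_1^2|\ge B_1$ is precisely the assertion $|\lambda|\ge1$, which forces $\max\{1,|\lambda|\}=|\lambda|$ and hence $|a_3-\tfrac{a_2^2}{2}|\le\tfrac16|B_2+\tfrac14B_1^2|$. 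Feeding the two bounds into the displayed triangle inequality gives (\ref{finalb}).

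For sharpness I would take the convex Ma--Minda extremal function, namely the Alexander transform $k_\varphi^{c}$ of the function $k_\varphi$ in (\ref{extS}) determined by $z(k_\varphi^{c})'(z)=k_\varphi(z)$, equivalently $1+z(k_\varphi^{c})''/(k_\varphi^{c})'=\varphi(iz)$. This corresponds to $\omega(z)=iz$, so $c_1=i$ and $c_2=0$, giving $a_2=\tfrac{i}{2}B_1$ and $a_3-\tfrac{a_2^2}{2}=-\tfrac16\big(B_2+\tfrac14B_1^2\big)$. Here $a_2^2=-B_1^2/4$ is a negative real number while $(a_3-\tfrac{a_2^2}{2})^2$ is a nonnegative real number, so the triangle inequality collapses to equality and $|\gamma_1^2-\gamma_2^2|$ equals the right-hand side of (\ref{finalb}).

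The only genuinely delicate points I would verify with care are two. First, that the hypothesis on $B_2+\tfrac14B_1^2$ is exactly the condition making $\max\{1,|\lambda|\}=|\lambda|$, so that the estimate simplifies cleanly rather than retaining a spurious constant. Second, that the extremal choice $\omega(z)=iz$ attains $|a_2|=B_1/2$ and $|a_3-\tfrac{a_2^2}{2}|=\tfrac16|B_2+\tfrac14B_1^2|$ \emph{simultaneously} and with the signs that make the triangle inequality tight. Neither is difficult, but both are exactly where the $\tfrac14B_1^2$ shift (absent in the starlike case) could be mishandled, so they deserve the closest checking.
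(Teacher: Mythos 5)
Your proof is correct, and its skeleton coincides with the paper's: the same class-independent decomposition $|\gamma_1^2-\gamma_2^2|\le\tfrac14\big(|a_2|^2+|a_3-\tfrac{a_2^2}{2}|^2\big)$, the same coefficient formulas $a_2=\tfrac12 B_1c_1$, $a_3=\tfrac16(B_1^2c_1^2+B_2c_1^2+B_1c_2)$, and the same extremal function $1+zh_\varphi''/h_\varphi'=\varphi(iz)$. The one genuine difference lies in how the bound $|a_3-\tfrac12 a_2^2|\le\tfrac16\big|B_2+\tfrac14 B_1^2\big|$ is obtained. The paper invokes Ma--Minda's Fekete--Szeg\H{o} theorem for $\mathcal{C}(\varphi)$ as a black box and checks that the hypothesis $|B_2+\tfrac14B_1^2|\ge B_1$ places $\lambda=\tfrac12$ in the first or third regime of that trichotomy; you instead write $a_3-\tfrac12 a_2^2=\tfrac{B_1}{6}(c_2+\lambda c_1^2)$ with $\lambda=(B_2+\tfrac14B_1^2)/B_1$ and apply the elementary Schwarz estimate $|c_2+\lambda c_1^2|\le\max\{1,|\lambda|\}$, which follows from $|c_2|\le 1-|c_1|^2$, noting that the hypothesis says exactly $|\lambda|\ge1$. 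Your route is more self-contained and makes transparent \emph{why} the hypothesis takes the form it does (it is precisely $|\lambda|\ge 1$), at the cost of reproving a special case of a known result; the paper's route is shorter on the page and stays uniform with its treatment of Theorem~\ref{thm1}, where the analogous Fekete--Szeg\H{o} citation is to Ali et al. Your sharpness verification is also slightly more informative than the paper's, since you point out that equality in the triangle inequality is forced by $a_2^2$ being a negative real while $(a_3-\tfrac12 a_2^2)^2$ is a nonnegative real, rather than just computing $\gamma_1$ and $\gamma_2$ for $h_\varphi$ and substituting.
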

\begin{proof}
    Suppose $f \in \mathcal{C}(\varphi)$ be of the form (\ref{first}). Then there exists a Schwarz function $\omega(z)= \sum_{n=1}^\infty c_n z^n$ such that
\begin{equation*}\label{Kphiw}
     1 + \frac{ z f''(z)}{f'(z) } = \varphi(\omega(z)), \quad  z \in \mathbb{D}.
\end{equation*}
    After comparing the coefficients of identical powers of $z$ with the Taylor series expansion of $f$, $\varphi$ and $\omega$ in the above equation, the coefficients $a_2$ and $a_3$ can be expressed as
\begin{equation}\label{a2a3Kk}
    a_2 = \frac{B_1 c_1}{2}, \quad  a_3 = \frac{1}{6} (B_1^2 c_1^2 + B_2 c_1^2 + B_1 c_2)
\end{equation}
  and
\begin{equation}\label{a4K}
      a_4=  \frac{1}{12} ( (4 B_1^3  + 3 B_1 B_2  + B_3 ) c_1^3 + ( 3 B_1^2  + 2 B_2 ) c_1 c_2 + B_1 c_3).
\end{equation}
  Applying the bound $\vert c_n \vert \leq 1$, we obtain
\begin{equation}\label{a2K}
    \vert a_2 \vert \leq \frac{B_1}{2}.
\end{equation}
   For $f\in \mathcal{C}(\varphi)$, Ma and Minda~\cite[Theorem 3]{MaMinda} established the following bound
\begin{equation*}\label{FSSPHI}
\begin{aligned}
\vert a_3 - \lambda a_2^2 \vert \leq
\left\{
\begin{array}{ll}
    \dfrac{1}{6}( B_2 -\frac{3}{2} \lambda B_1^2 + B_1^2  ), & \text{if} \;\; 3 \lambda B_1^2 \leq 2 (B_1^2 + B_2 - B_1); \\ \\
    \dfrac{1}{6}B_1,                      & \text{if} \;\;  2 (B_1^2 + B_2 - B_1) \leq 3 \lambda B_1^2    \leq  2( B_1^2 + B_2 + B_1) ; \\ \\
    \dfrac{1}{6} (- B_2  + \frac{3}{2} \lambda B_1^2  - B_1^2 ),  & \text{if} \;\;  2( B_1^2 + B_2 + B_1) \leq 3 \lambda B_1^2.
\end{array}
\right.
\end{aligned}
\end{equation*}
   Since $ \vert  B_2 + \frac{1}{4} B_1^2 \vert \geq B_1 $ holds, the above inequality directly gives
\begin{equation}\label{a3a2K}
    \vert a_3 - \frac{1}{2} a_2^2 \vert \leq  \frac{1}{6}\vert B_2 + \frac{1}{4} B_1^2 \vert.
\end{equation}
   Using the bounds of $\vert a_2 \vert$ and $\vert a_3 -(a_2^2)/2 \vert $  for $f \in \mathcal{C}(\varphi)$ given in (\ref{a2K}) and (\ref{a3a2K}), respectively, we obtain
   $$  \vert \gamma_1^2 - \gamma_{2}^2 \vert \leq \frac{1}{4} \bigg( \vert a_2 \vert^2 + \bigg\vert a_3 - \frac{a_2^2}{2} \bigg\vert^2 \bigg) \leq \frac{B_1^2}{16} + \frac{1}{144} \bigg(B_2 + \frac{B_1^2}{4} \bigg)^2. $$
   The equality case in (\ref{finalb}) holds for the function $h_\varphi$ given by
\begin{equation}\label{exth}
   1 + \frac{z h_\varphi''(z)}{h_\varphi'(z)} = \varphi(i z).
\end{equation}
   Clearly, $h_\varphi \in \mathcal{C}(\varphi)$ and for this function, we have
   $$ \gamma_1 = \frac{i B_1}{4} \;\; \text{and}\;\;  \gamma_2 = - \frac{1}{12} ( B_2 + \frac{B_1^2}{4}),  $$
   which shows that the bound in (\ref{finalb}) is sharp.
\end{proof}
\begin{theorem}\label{thm3}
    Let $\varphi(z) = 1 + B_1 z+ B_2 z^2 +B_3 z^3 +\cdots$ and $f \in \mathcal{S}^*(\varphi)$. If $ \vert B_2 \vert \geq B_1$ and $ (\mu_1 , \nu_1 ) \in \cup_{i=1}^{3} D_i$ hold, then
    $$ \vert \gamma_2^2 - \gamma_3^2  \vert \leq \frac{1}{144}(9 B_2^2 + 4 B_3^2), $$
   where $\mu_1 = 2 B_2/B_1$ and $\nu_1 = B_3/B_1$.
    The bound is sharp.
\end{theorem}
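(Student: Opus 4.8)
The plan is to express both $\gamma_2$ and $\gamma_3$ as explicit functionals of the Schwarz coefficients $c_1,c_2,c_3$ of the subordinating function $\omega$, to bound $|\gamma_2|$ and $|\gamma_3|$ separately, and then to combine them through the elementary inequality $|\gamma_2^2-\gamma_3^2|\leq |\gamma_2|^2+|\gamma_3|^2$. First I would take $f\in\mathcal{S}^*(\varphi)$ with $zf'(z)/f(z)=\varphi(\omega(z))$, $\omega(z)=\sum_{n=1}^\infty c_n z^n\in\Omega$, and substitute the coefficient identities (\ref{a2a3}) and (\ref{a4}) into the logarithmic-coefficient formulas (\ref{gammas}). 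After the $B_1^3$, $B_1B_2$ and $B_1^2$ contributions cancel, I expect this to collapse to the compact forms
$$ \gamma_2 = \frac{1}{4}\left(B_1 c_2 + B_2 c_1^2\right), \qquad \gamma_3 = \frac{1}{6}\left(B_1 c_3 + 2 B_2 c_1 c_2 + B_3 c_1^3\right). $$

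For $|\gamma_2|$, I would observe that $\gamma_2=\tfrac12\left(a_3-\tfrac12 a_2^2\right)$, so the Fekete–Szeg\"o estimate (\ref{FSBS}), which is already available under the hypothesis $|B_2|\geq B_1$, yields at once $|\gamma_2|\leq |B_2|/4$. For $|\gamma_3|$ the decisive step is to factor out $B_1$ and write $\gamma_3=\tfrac{B_1}{6}\left(c_3+\mu_1 c_1 c_2+\nu_1 c_1^3\right)$ with $\mu_1=2B_2/B_1$ and $\nu_1=B_3/B_1$. Since the hypothesis places $(\mu_1,\nu_1)\in\cup_{i=1}^{3}D_i$, Lemma~\ref{Old} applies and gives $|c_3+\mu_1 c_1 c_2+\nu_1 c_1^3|\leq |\nu_1|=|B_3|/B_1$, hence $|\gamma_3|\leq |B_3|/6$. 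Combining the two bounds,
$$ |\gamma_2^2-\gamma_3^2| \leq |\gamma_2|^2 + |\gamma_3|^2 \leq \frac{B_2^2}{16}+\frac{B_3^2}{36} = \frac{1}{144}\left(9B_2^2 + 4B_3^2\right), $$
which is exactly the asserted estimate.

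For sharpness I would test the Ma–Minda extremal function $k_\varphi$ of (\ref{extS}), which corresponds to the choice $\omega(z)=iz$, that is $c_1=i$ and $c_2=c_3=0$. Feeding these into the compact forms above produces $\gamma_2=-B_2/4$ and $\gamma_3=-iB_3/6$, so that $\gamma_2^2=B_2^2/16$ and $\gamma_3^2=-B_3^2/36$; consequently $\gamma_2^2-\gamma_3^2=B_2^2/16+B_3^2/36$, which meets the bound with equality.

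The main obstacle is sharpness rather than the upper bound: the passage $|\gamma_2^2-\gamma_3^2|\leq |\gamma_2|^2+|\gamma_3|^2$ is generally lossy, so attaining equality requires an extremal function at which $|\gamma_2|$ and $|\gamma_3|$ are \emph{simultaneously} maximal and, crucially, at which $\gamma_2^2$ and $-\gamma_3^2$ share the same argument so that the two moduli add rather than partially cancel. The factor $i$ in $\omega(z)=iz$ is precisely what rotates $\gamma_3^2$ onto the negative real axis while leaving $\gamma_2^2$ on the positive real axis; verifying this phase alignment (and confirming that $k_\varphi$ indeed forces $c_2=c_3=0$) is the delicate point of the argument.
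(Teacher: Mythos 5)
Your proposal is correct and follows essentially the same route as the paper: the same splitting $\vert\gamma_2^2-\gamma_3^2\vert\le\vert\gamma_2\vert^2+\vert\gamma_3\vert^2$, the Fekete--Szeg\H{o} bound (\ref{FSBS}) under $\vert B_2\vert\ge B_1$ for $\vert\gamma_2\vert$, Lemma~\ref{Old} applied to $c_3+\mu_1c_1c_2+\nu_1c_1^3$ for $\vert\gamma_3\vert$, and the same extremal function $k_\varphi$ from (\ref{extS}). The only cosmetic difference is that you write $\gamma_2$ and $\gamma_3$ explicitly in the Schwarz coefficients $c_1,c_2,c_3$ (your compact forms are correct), whereas the paper keeps them as the combinations $a_3-\tfrac12 a_2^2$ and $\tfrac13 a_2^3-a_2a_3+a_4$; the resulting estimates are identical.
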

\begin{proof}
    Suppose $f\in \mathcal{S}^*(\varphi)$ be of the form (\ref{first}). Then from (\ref{gammas}), we have
\begin{equation}\label{g2g3}
\begin{aligned}
     \vert \gamma_2^2 - \gamma_3^2 \vert &= \frac{1}{4} \bigg\vert \bigg( a_3 - \frac{a_2^2}{2} \bigg)^2  - \bigg( \frac{a_2^3}{3} - a_2 a_3 + a_4 \bigg)^2 \bigg\vert \\
                          &\leq \frac{1}{4} \bigg( \bigg\vert a_3 - \frac{a_2^2}{2} \bigg\vert^2  + \bigg\vert \frac{a_2^3}{3} - a_2 a_3 + a_4 \bigg\vert^2 \bigg).
\end{aligned}
\end{equation}
    From (\ref{a2a3}) and (\ref{a4}) for $f\in \mathcal{S}^*(\varphi)$, using the values of $a_2$, $a_3$ and $a_4$  ,we obtain
\begin{equation*}
    \bigg\vert \frac{a_2^3}{3} - a_2 a_3 + a_4  \bigg\vert = \frac{B_1}{3} \vert c_3 + \mu_1 c_1 c_2 + \nu_1 c_1^3 \vert,
\end{equation*}
   where $\mu_1 = 2 B_2/B_1$ and $\nu_1 = B_3/B_1$. Since $\vert B_2 \vert \geq B_1$ holds, therefore $(\mu_1, \nu_1)$ is a member of either $D_1$, $D_2$ or $D_3$. Thus, from Lemma~\ref{Old}, we get
\begin{equation}\label{c3c1c2}
    \bigg\vert \frac{a_2^3}{3} - a_2 a_3 + a_4  \bigg\vert \leq \frac{\vert B_3 \vert}{3}.
\end{equation}
   Using the bounds from (\ref{FSBS}) and (\ref{c3c1c2})  in the inequality (\ref{g2g3}), the required bound is obtained.

   The sharpness of the bound can be seen by the function $k_\varphi$ given by (\ref{extS}). As for this function, we have
   $ \gamma_2 = -B_2/4 $, $\gamma_3 = -i B_3/6$ and
   $$  \gamma_2^2 - \gamma_3^2 = \frac{1}{144} (9 B_2^2 + 4 B_3^2), $$
   which proves the sharpness.
\end{proof}
\begin{theorem}\label{thm4}
   Let $\varphi(z) = 1 + B_1 z+ B_2 z^2 +B_3 z^3 +\cdots$ and $ f \in \mathcal{C}(\varphi)$. If $ \vert  B_2 + \frac{1}{4} B_1^2 \vert \geq B_1 $ and $(\mu_2, \nu_2)\in \cup_{i=1}^{3} D_i$ holds, then
   $$  \vert \gamma_2^2 - \gamma_3^2 \vert \leq  \frac{B_1^4 + 8 B_1^2 B_2 + 16 B_2^2 + B_1^2 B_2^2 + 4 B_1 B_2 B_3 + 4 B_3^2}{2304}, $$
   where  $\mu_2=  (B_1^2 + 4 B_2 )/( 2 B_1) $ and $\nu_2 = (B_1 B_2 + 2 B_3 )/(2 B_1)$. The bound is sharp.
\end{theorem}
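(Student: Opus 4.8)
The plan is to follow the template of Theorem~\ref{thm3} essentially verbatim, transferring every step from $\mathcal{S}^*(\varphi)$ to $\mathcal{C}(\varphi)$. The decomposition in (\ref{g2g3}) is a consequence only of the relations (\ref{gammas}) between $\gamma_2,\gamma_3$ and the Taylor coefficients $a_2,a_3,a_4$, namely $\gamma_2=\tfrac12\big(a_3-\tfrac12 a_2^2\big)$ and $\gamma_3=\tfrac12\big(a_4-a_2a_3+\tfrac13 a_2^3\big)$; it therefore holds for \emph{any} $f\in\mathcal A$. Hence I would start from
\[
    |\gamma_2^2-\gamma_3^2|\le \frac14\Big(\big|a_3-\tfrac12 a_2^2\big|^2+\big|\tfrac13 a_2^3-a_2a_3+a_4\big|^2\Big),
\]
so that the whole task reduces to estimating these two moduli for the convex class.

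For the first modulus I would simply quote the Fekete--Szeg\"o bound (\ref{a3a2K}) already established for $\mathcal C(\varphi)$ in the proof of Theorem~\ref{thm2}: under the hypothesis $|B_2+\tfrac14 B_1^2|\ge B_1$ it gives $\big|a_3-\tfrac12 a_2^2\big|\le \tfrac16\big|B_2+\tfrac14 B_1^2\big|$. For the second modulus I would substitute the convex-class coefficient formulae (\ref{a2a3Kk}) and (\ref{a4K}) into $\tfrac13 a_2^3-a_2a_3+a_4$ and collect the coefficients of $c_3$, $c_1c_2$ and $c_1^3$. I expect this to collapse to the single expression $\tfrac{B_1}{12}\big(c_3+\mu_2 c_1c_2+\nu_2 c_1^3\big)$ with $\mu_2=(B_1^2+4B_2)/(2B_1)$ and $\nu_2=(B_1B_2+2B_3)/(2B_1)$, exactly as in the statement. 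The standing hypothesis $(\mu_2,\nu_2)\in\cup_{i=1}^3 D_i$ is precisely the admissibility condition needed to apply Lemma~\ref{Old}, which then yields $\big|\tfrac13 a_2^3-a_2a_3+a_4\big|\le \tfrac{B_1}{12}|\nu_2|=\tfrac1{24}|B_1B_2+2B_3|$.

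Combining the two estimates, squaring, and simplifying the arithmetic identity
\[
    \frac14\!\left(\frac1{36}\Big(B_2+\frac{B_1^2}{4}\Big)^2+\frac1{576}\big(B_1B_2+2B_3\big)^2\right)=\frac{B_1^4+8B_1^2B_2+16B_2^2+B_1^2B_2^2+4B_1B_2B_3+4B_3^2}{2304}
\]
gives the claimed bound. For sharpness I would take the function $h_\varphi$ of (\ref{exth}), for which $\omega(z)=iz$, i.e.\ $c_1=i$ and $c_2=c_3=0$; a direct computation then gives the purely real $\gamma_2=-\tfrac1{12}\big(B_2+\tfrac14 B_1^2\big)$ and the purely imaginary $\gamma_3=-\tfrac{i}{48}\big(B_1B_2+2B_3\big)$, so that $\gamma_2^2\ge 0$ and $-\gamma_3^2\ge 0$ add with no cancellation to the right-hand side, establishing equality.

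The one genuinely delicate point is the coefficient bookkeeping that produces $\mu_2$ and $\nu_2$: the cubic coefficient $\nu_2$ in particular must be assembled carefully from the $c_1^3$-contributions of $a_2^3$, $a_2a_3$ and $a_4$, and it is this identification (rather than any of the inequalities, which are direct citations of Lemma~\ref{Old} and of (\ref{a3a2K})) that carries the weight of the argument. Everything else is a faithful transcription of the Theorem~\ref{thm3} reasoning from the starlike to the convex setting.
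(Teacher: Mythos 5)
Your proposal is correct and follows essentially the same route as the paper's own proof: the same decomposition (\ref{g2g3}), the same identification $\tfrac{1}{3}a_2^3 - a_2 a_3 + a_4 = \tfrac{B_1}{12}\left(c_3 + \mu_2 c_1 c_2 + \nu_2 c_1^3\right)$ followed by Lemma~\ref{Old}, the same Fekete--Szeg\"{o} bound (\ref{a3a2K}), and sharpness via the same extremal function $h_\varphi$ from (\ref{exth}). Your explicit computation of $\gamma_2 = -\tfrac{1}{12}\left(B_2 + \tfrac{1}{4}B_1^2\right)$ and $\gamma_3 = -\tfrac{i}{48}\left(B_1 B_2 + 2B_3\right)$ for $h_\varphi$ is in fact more detailed than the paper's, which leaves the equality case as ``a simple exercise.''
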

\begin{proof}
     In view of the equations (\ref{a2a3Kk}) and (\ref{a4K}) for $f(z)= z+ \sum_{n=2}^\infty a_n z^n \in \mathcal{C}(\varphi)$, we have
    $$ \bigg\vert \frac{a_2^3}{3} - a_2 a_3 + a_4   \bigg\vert =  \frac{B_1}{12}  \bigg\vert c_3  + \mu_2  c_1 c_2 + \nu_2 c_1^3  \bigg\vert. $$
    As by the hypothesis $ \vert  B_2 + \frac{1}{4} B_1^2 \vert \geq B_1 $ holds, therefore $(\mu_2, \nu_2 )$ belongs to either $ D_1$, $ D_2$ or $ D_3$. Hence, from Lemma~\ref{Old}, we obtain
\begin{equation}\label{c3c1c2K}
   \bigg\vert \frac{a_2^3}{3} - a_2 a_3 + a_4   \bigg\vert \leq \frac{\vert B_1 B_2 + 2 B_3 \vert}{24}.
\end{equation}
    Applying the bound from (\ref{a3a2K}) and (\ref{c3c1c2K}) in the inequality (\ref{g2g3}), we get
    $$  \vert \gamma_2^2 -\gamma_3^2 \vert \leq  \frac{B_1^4 + 8 B_1^2 B_2 + 16 B_2^2 + B_1^2 B_2^2 + 4 B_1 B_2 B_3 + 4 B_3^2}{2304}. $$
    It is a simple exercise to check that
     the equality case holds for the function $h_\varphi \in \mathcal{C}(\varphi)$ given by (\ref{exth}).
\end{proof}
\subsection{Some Special Cases}
    Since the classes $\mathcal{S}^*(\varphi)$ and $\mathcal{C}(\varphi)$ generalize various subclasses of starlike  and convex functions, therefore,
    for the appropriate choice of $\varphi$, whenever the Taylor series coefficients of $\varphi$ satisfy the conditions in Theorem~\ref{thm1}-\ref{thm4}, we obtain the sharp bounds of $\vert T_{2,1}(\gamma_f)\vert$ and $\vert T_{2,2}(\gamma_f)\vert$ for the corresponding class.

   In case of $\varphi(z)=(1+ A z)/(1+ B z)$ $(-1 \leq B < A \leq 1)$, we have $\mathcal{S}^*[A,B]=\mathcal{S}^*((1+ A z)/(1+ B z))$ and $\mathcal{C}[A,B]=\mathcal{C}((1+ A z)/(1+ B z))$. The series expansion of $(1+ A z)/(1+ B z)$ shows that
   $B_1= (A-B)$, $B_2 = B^2 -A B $ and $B_3 = A B^2 - B^3$.  Thus, Theorem~\ref{thm1}-\ref{thm4}  lead us to the following:
\begin{corollary}\label{crl1}
    Let $f\in \mathcal{S}^*[A,B]$  be of the form (\ref{first}), where $-1 \leq B < A \leq 1$.
\begin{enumerate}[(i)]
  \item If $\vert  B^2 -A B \vert \geq  A - B$, then
   $$ \vert \gamma_1^2 - \gamma_2^2  \vert \leq  \frac{ (A - B)^2 (4 + B^2)}{16} .$$
   \item If $\vert  B^2 -A B \vert \geq  A - B$, and $(\mu_1, \nu_1)\in \cup_{i=1}^3 D_i $, then
    $$ \vert \gamma_2^2 - \gamma_3^2  \vert \leq \frac{(A - B)^2 B^2 (4 B^2 + 9)}{144}, $$
    where $\mu_1 =-2 B$ and $\nu_1 = B^2.$
\end{enumerate}
\end{corollary}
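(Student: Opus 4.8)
The plan is to treat Corollary~\ref{crl1} as a direct specialization of Theorems~\ref{thm1} and~\ref{thm3} to the Ma--Minda function $\varphi(z) = (1+Az)/(1+Bz)$, so that the entire argument reduces to computing the first three Taylor coefficients of $\varphi$ and substituting them into the sharp bounds already established. First I would expand $\varphi$ as a geometric series,
\begin{equation*}
  \frac{1+Az}{1+Bz} = (1+Az)\sum_{k=0}^{\infty}(-B)^k z^k = 1 + (A-B)z + (B^2 - AB)z^2 + (AB^2 - B^3)z^3 + \cdots,
\end{equation*}
which identifies $B_1 = A-B$, $B_2 = B^2 - AB = -B(A-B)$ and $B_3 = AB^2 - B^3 = B^2(A-B)$. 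Since $A > B$ we have $B_1 = A-B > 0$ and $\vert B_2\vert = \vert B\vert\,(A-B)$, so the common hypothesis $\vert B_2\vert \geq B_1$ of both theorems collapses to exactly the stated condition $\vert B^2 - AB\vert \geq A-B$.

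For part~(i) I would apply Theorem~\ref{thm1}, whose bound reads $\vert \gamma_1^2 - \gamma_2^2\vert \leq B_1^2/4 + B_2^2/16$ under this hypothesis. Substituting the computed values gives
\begin{equation*}
  \frac{(A-B)^2}{4} + \frac{B^2(A-B)^2}{16} = \frac{(A-B)^2(4+B^2)}{16},
\end{equation*}
as claimed. For part~(ii) I would invoke Theorem~\ref{thm3} with $\mu_1 = 2B_2/B_1 = -2B$ and $\nu_1 = B_3/B_1 = B^2$; these are precisely the values recorded in the statement, and the extra hypothesis $(\mu_1,\nu_1)\in\cup_{i=1}^3 D_i$ is carried over verbatim. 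The bound $\vert \gamma_2^2 - \gamma_3^2\vert \leq (9B_2^2 + 4B_3^2)/144$ then simplifies to
\begin{equation*}
  \frac{9B^2(A-B)^2 + 4B^4(A-B)^2}{144} = \frac{(A-B)^2 B^2(4B^2+9)}{144}.
\end{equation*}

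There is no genuine analytic obstacle here, since this is a corollary; the only points requiring care — and what I would flag as the place where slips are likely — are the sign conventions in $B_2$ and $B_3$ (both carry a factor of $A-B$, and $B_2$ is negative when $B>0$), and verifying that the hypotheses of the parent theorems really do reduce to the conditions as written rather than to something stronger. Sharpness needs no new work: it is inherited directly from the extremal function $k_\varphi$ of~(\ref{extS}) used in Theorems~\ref{thm1} and~\ref{thm3}, whose extremal logarithmic coefficients are here pinned down by the above expressions for $B_1,B_2,B_3$, so the equalities established there specialize to equality in both parts of the corollary.
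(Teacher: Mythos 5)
Your proposal is correct and follows exactly the paper's route: the paper likewise obtains Corollary~\ref{crl1} by expanding $\varphi(z)=(1+Az)/(1+Bz)$ to get $B_1=A-B$, $B_2=B^2-AB$, $B_3=AB^2-B^3$ and substituting these into the sharp bounds of Theorems~\ref{thm1} and~\ref{thm3}, with the hypotheses and the values $\mu_1=-2B$, $\nu_1=B^2$ carrying over just as you describe. Your algebraic simplifications and the sharpness inheritance from $k_\varphi$ are all accurate.
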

\begin{corollary}\label{crl2}
     Let $f\in \mathcal{C}[A,B]$  be of the form (\ref{first}), where $-1 \leq B < A \leq 1$.
\begin{enumerate}[(i)]
  \item  If $ \vert A^2 - 6 A B + 5 B^2 \vert \geq 4( A - B)$, then
   $$ \vert \gamma_1^2 - \gamma_2^2  \vert \leq \frac{(A - B)^2 ( A^2 + 25 B^2 - 10 A B  + 144 )}{2304}. $$
  \item If $ \vert A^2 - 6 A B + 5 B^2 \vert \geq 4( A - B)$ and $(\mu_2, \nu_2)\in \cup_{i=1}^3 D_i $, then
   $$ \vert \gamma_2^2 - \gamma_3^2  \vert \leq  \frac{(A - B)^2 (A^2 ( B^2 + 1) + B^2 ( 9 B^2 + 25 )  - 2 A B ( 3 B^2 + 5 ) )}{2304}, $$
   where $\mu_2 = (A - 5 B)/2 $ and $\nu_2 = (B ( 3 B -A ))/2.$
\end{enumerate}
\end{corollary}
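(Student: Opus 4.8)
The plan is to specialize Theorems~\ref{thm2} and~\ref{thm4} to the Janowski generator $\varphi(z)=(1+Az)/(1+Bz)$, whose Taylor coefficients are $B_1=A-B$, $B_2=B^2-AB$ and $B_3=AB^2-B^3$. Part~(i) will follow from Theorem~\ref{thm2} and part~(ii) from Theorem~\ref{thm4}; in each case the only work is to substitute these values into the respective bound and simplify. First I would verify that the common hypothesis $\vert B_2+\tfrac14 B_1^2\vert\geq B_1$ of both theorems reduces to the stated condition. A direct computation gives
$$ B_2+\frac{1}{4}B_1^2 = (B^2-AB)+\frac{1}{4}(A-B)^2 = \frac{1}{4}\bigl(A^2-6AB+5B^2\bigr) = \frac{1}{4}(A-B)(A-5B), $$
so $\vert B_2+\tfrac14 B_1^2\vert\geq B_1$ is exactly $\vert A^2-6AB+5B^2\vert\geq 4(A-B)$, the hypothesis appearing in both parts.

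For part~(i), I would insert $B_1^2=(A-B)^2$ together with the value of $B_2+\tfrac14 B_1^2$ computed above into the bound of Theorem~\ref{thm2}. Using $A^2-6AB+5B^2=(A-B)(A-5B)$, the second term becomes $\tfrac{1}{144}\cdot\tfrac{1}{16}(A-B)^2(A-5B)^2$, and factoring out $(A-B)^2$ yields
$$ \vert\gamma_1^2-\gamma_2^2\vert \leq (A-B)^2\,\frac{144+(A-5B)^2}{2304} = \frac{(A-B)^2\bigl(A^2-10AB+25B^2+144\bigr)}{2304}, $$
which is the claimed estimate after expanding $(A-5B)^2$.

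For part~(ii), I would first confirm the parameters of Theorem~\ref{thm4}: since $B_1^2+4B_2=(A-B)(A-5B)$ and $B_1B_2+2B_3=B(A-B)(3B-A)$, dividing by $2B_1=2(A-B)$ gives $\mu_2=(A-5B)/2$ and $\nu_2=B(3B-A)/2$, matching the stated values, so the additional hypothesis $(\mu_2,\nu_2)\in\cup_{i=1}^3 D_i$ is exactly as written. The key simplification is to observe that the numerator in Theorem~\ref{thm4} is a sum of two squares,
$$ B_1^4+8B_1^2B_2+16B_2^2+B_1^2B_2^2+4B_1B_2B_3+4B_3^2 = (B_1^2+4B_2)^2+(B_1B_2+2B_3)^2. $$
Substituting the two factored expressions, this equals $(A-B)^2\bigl[(A-5B)^2+B^2(3B-A)^2\bigr]$, and expanding the single bracket produces $A^2(B^2+1)+B^2(9B^2+25)-2AB(3B^2+5)$, which is the asserted bound.

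The computations are elementary, so there is no genuine obstacle; the work is purely bookkeeping, and the step most likely to obscure matters is the simplification in part~(ii). The cleanest route is to resist expanding everything at once: exploiting the perfect-square grouping of the numerator together with the factorizations $B_1^2+4B_2=(A-B)(A-5B)$ and $B_1B_2+2B_3=B(A-B)(3B-A)$ keeps the common factor $(A-B)^2$ visible throughout and reduces the task to expanding one two-term bracket. Finally, sharpness in both parts is inherited from the extremal function $h_\varphi$ of the parent theorems, specialized to $\varphi(z)=(1+Az)/(1+Bz)$.
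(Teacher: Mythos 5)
Your proposal is correct and follows exactly the route the paper intends: the corollary is stated as an immediate specialization of Theorems~\ref{thm2} and~\ref{thm4} to $\varphi(z)=(1+Az)/(1+Bz)$ with $B_1=A-B$, $B_2=B^2-AB$, $B_3=AB^2-B^3$, and your verification of the hypothesis $\vert B_2+\tfrac14 B_1^2\vert\geq B_1$, the parameters $\mu_2,\nu_2$, and the two bounds (including the clean sum-of-squares grouping of the numerator in Theorem~\ref{thm4}) supplies precisely the bookkeeping the paper leaves implicit.
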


   By taking $A= 1- 2 \alpha$, $0 \leq \alpha <1$ and $B=-1$, the following results follow from Corollary~\ref{crl1} and Corollary~\ref{crl2}.
\begin{corollary}\label{crl6}
   If $f \in \mathcal{S}^*(\alpha)$, $0 \leq \alpha <1$, then
  $$ \vert \gamma_1^2 - \gamma_2^2  \vert \leq \frac{5}{16} (2 - 2 \alpha)^2 \;\; \text{and}\;\; \vert \gamma_2^2 - \gamma_3^2  \vert \leq \frac{13}{144} (2 - 2 \alpha)^2 .$$
\end{corollary}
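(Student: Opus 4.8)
The plan is to treat this as a pure specialization of Corollary~\ref{crl1}. By definition $\mathcal{S}^*(\alpha) = \mathcal{S}^*[1-2\alpha,-1]$, so I would simply set $A = 1-2\alpha$ and $B = -1$ in the Janowski estimates of Corollary~\ref{crl1}. The entire argument then reduces to verifying that the hypotheses of that corollary are satisfied for these parameter values, after which both inequalities drop out by elementary arithmetic. No new analysis is needed, since all the substantive work was already carried out in Theorems~\ref{thm1} and \ref{thm3} and their Janowski specialization.

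First I would record the basic quantities: $A - B = (1-2\alpha) - (-1) = 2 - 2\alpha$ and $B^2 = 1$. Next I would check the standing hypothesis $\vert B^2 - AB\vert \geq A - B$. With $B = -1$ we have $B^2 - AB = 1 + A = 2 - 2\alpha$, whose absolute value is exactly $A - B = 2 - 2\alpha$; thus the hypothesis holds (as an equality) for every $\alpha \in [0,1)$. For the second bound I additionally need $(\mu_1,\nu_1) \in \cup_{i=1}^{3} D_i$. Here $\mu_1 = -2B = 2$ and $\nu_1 = B^2 = 1$, so $(\mu_1,\nu_1) = (2,1)$, which lies in $D_1$ since $\vert\mu_1\vert = 2 \leq 2$ and $\nu_1 = 1 \geq 1$.

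With the hypotheses confirmed, I would substitute directly into the two bounds of Corollary~\ref{crl1}. For part (i), $4 + B^2 = 5$ yields $\vert\gamma_1^2 - \gamma_2^2\vert \leq \tfrac{(2-2\alpha)^2 \cdot 5}{16} = \tfrac{5}{16}(2-2\alpha)^2$; for part (ii), $B^2 = 1$ and $4B^2 + 9 = 13$ give $\vert\gamma_2^2 - \gamma_3^2\vert \leq \tfrac{(2-2\alpha)^2 \cdot 13}{144} = \tfrac{13}{144}(2-2\alpha)^2$. These are precisely the asserted estimates. Sharpness is inherited from the extremal function $k_\varphi$ of the parent theorems and need not be re-established.

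The main obstacle is genuinely minor here: the only delicate point is that the hypothesis $\vert B^2 - AB\vert \geq A - B$ is met as an \emph{equality} rather than a strict inequality, uniformly in $\alpha$, and correspondingly that $(2,1)$ sits on the boundary of the region $D_1$. Both are admissible since Corollary~\ref{crl1} and Lemma~\ref{Old} require only non-strict inequalities, so the boundary cases cause no difficulty and the specialization goes through for the whole range $0 \leq \alpha < 1$.
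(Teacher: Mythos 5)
Your proposal is correct and follows exactly the paper's route: Corollary~\ref{crl6} is obtained by specializing Corollary~\ref{crl1} with $A=1-2\alpha$, $B=-1$, and your verification that $\vert B^2-AB\vert = A-B = 2-2\alpha$ (equality case) and $(\mu_1,\nu_1)=(2,1)\in D_1$ confirms the hypotheses hold on the boundary, which the non-strict inequalities permit. The arithmetic substitutions giving the factors $5/16$ and $13/144$ are exactly right, so nothing is missing.
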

\begin{corollary}\label{crl7}
    If $f \in \mathcal{C}(\alpha)$, $0 \leq \alpha <1$, then
    $$ \vert \gamma_1^2 - \gamma_2^2  \vert \leq \frac{( \alpha -1 )^2 ( \alpha^2 - 6 \alpha  +45)}{144} \;\;\text{and}\;\;
    \vert \gamma_2^2 - \gamma_3^2  \vert \leq \frac{( \alpha -1 )^2 ( 2 \alpha^2  - 10 \alpha  + 13 ) }{144}.   $$
\end{corollary}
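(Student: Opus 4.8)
The plan is to obtain both inequalities as the single specialization $A = 1 - 2\alpha$, $B = -1$ of Corollary~\ref{crl2}, so the whole argument reduces to checking that the hypotheses of that corollary hold for every $\alpha \in [0,1)$ and then simplifying the resulting polynomials in $\alpha$. First I would record the elementary substitutions coming from $A = 1 - 2\alpha$ and $B = -1$, namely $A - B = 2(1 - \alpha)$, $AB = 2\alpha - 1$, $A^2 = 4\alpha^2 - 4\alpha + 1$, and $B^2 = 1$, together with $(A-B)^2 = 4(1-\alpha)^2$; these are the only ingredients both estimates need.

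For the first bound I would verify the hypothesis $\vert A^2 - 6AB + 5B^2 \vert \geq 4(A - B)$. A direct computation gives $A^2 - 6AB + 5B^2 = 4(1 - \alpha)(3 - \alpha)$, which is positive on $[0,1)$, while $4(A - B) = 8(1 - \alpha)$; after cancelling the positive factor $4(1-\alpha)$ the condition becomes $3 - \alpha \geq 2$, valid for all $\alpha \leq 1$. Substituting into Corollary~\ref{crl2}(i) and collecting terms, the factor $A^2 + 25B^2 - 10AB + 144$ collapses to $4(\alpha^2 - 6\alpha + 45)$; combined with $(A-B)^2 = 4(1-\alpha)^2$ the numerical factor $16/2304 = 1/144$ produces exactly $\vert \gamma_1^2 - \gamma_2^2 \vert \leq (\alpha-1)^2(\alpha^2 - 6\alpha + 45)/144$.

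For the second bound the additional requirement is $(\mu_2,\nu_2) \in \cup_{i=1}^3 D_i$. Evaluating the parameters from Corollary~\ref{crl2}(ii) gives $\mu_2 = (A - 5B)/2 = 3 - \alpha$ and $\nu_2 = B(3B - A)/2 = 2 - \alpha$. Since $2 < \vert \mu_2 \vert = 3 - \alpha \leq 3$ on $[0,1)$, the relevant region is $D_2$, so I would check the defining inequality $\nu_2 \geq \tfrac{1}{12}(\mu_2^2 + 8)$: clearing denominators reduces it to $\alpha^2 + 6\alpha - 7 \leq 0$, i.e. $(\alpha + 7)(\alpha - 1) \leq 0$, which holds precisely for $\alpha \leq 1$. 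Hence Lemma~\ref{Old} applies through Corollary~\ref{crl2}(ii), and substituting yields $A^2(B^2+1) + B^2(9B^2+25) - 2AB(3B^2+5) = 4(2\alpha^2 - 10\alpha + 13)$; again multiplying by $(A-B)^2 = 4(1-\alpha)^2$ and $16/2304$ gives $\vert \gamma_2^2 - \gamma_3^2 \vert \leq (\alpha-1)^2(2\alpha^2 - 10\alpha + 13)/144$.

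There is no genuine obstacle beyond bookkeeping, since both estimates are pure substitutions into Corollary~\ref{crl2}. The one point demanding care is the membership of $(\mu_2,\nu_2)$ in the admissible region of Lemma~\ref{Old}: the tempting shortcut is to invoke $D_1$ because $\nu_2 = 2 - \alpha > 1$, but in fact $\vert \mu_2 \vert = 3 - \alpha > 2$ on the whole range, which rules out $D_1$ and forces one to confirm the sharper constraint $\nu_2 \geq \tfrac{1}{12}(\mu_2^2 + 8)$ defining $D_2$ rather than merely $\nu_2 \geq 1$.
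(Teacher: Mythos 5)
Your proposal is correct and follows exactly the paper's route: the paper obtains Corollary~\ref{crl7} by the same specialization $A=1-2\alpha$, $B=-1$ in Corollary~\ref{crl2}, and your computations (the factorizations $A^2-6AB+5B^2=4(1-\alpha)(3-\alpha)$, $\mu_2=3-\alpha$, $\nu_2=2-\alpha$, and the collapse to $4(\alpha^2-6\alpha+45)$ and $4(2\alpha^2-10\alpha+13)$) all check out. Your explicit verification that $(\mu_2,\nu_2)$ lands in $D_2$ rather than $D_1$ is a detail the paper leaves implicit, and it is handled correctly.
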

   In particular, for $\alpha=0$, Corollary~\ref{crl6} and Corollary~\ref{crl7} give the bounds for the classes $\mathcal{S}^*$ and $\mathcal{C}$ respectively.
\begin{corollary}
   If $f \in \mathcal{S}^*$, then
  $$ \vert \gamma_1^2 - \gamma_2^2  \vert \leq \frac{5}{4} \;\; \text{and} \;\; \vert \gamma_2^2 - \gamma_3^2  \vert \leq  \frac{13}{36} .$$
\end{corollary}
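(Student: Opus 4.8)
The plan is to recognize the class $\mathcal{S}^*$ as the special case $\mathcal{S}^*(\alpha)$ with $\alpha = 0$, equivalently $\mathcal{S}^*(\varphi)$ with $\varphi(z) = (1+z)/(1-z)$, and then simply specialize the estimates already established. First I would set $\alpha = 0$ in the two bounds of Corollary~\ref{crl6}. Since the common factor $(2 - 2\alpha)^2$ becomes $4$ at $\alpha = 0$, we get $\frac{5}{16}(2-2\alpha)^2 = \frac{5}{16}\cdot 4 = \frac{5}{4}$ and $\frac{13}{144}(2-2\alpha)^2 = \frac{13}{144}\cdot 4 = \frac{13}{36}$, which are exactly the two asserted inequalities.

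To keep the argument self-contained I would trace the specialization back to Theorems~\ref{thm1} and~\ref{thm3}. Taking the Janowski parameters $A = 1$, $B = -1$ corresponding to $\alpha = 0$ gives $\varphi(z) = (1+z)/(1-z) = 1 + 2z + 2z^2 + 2z^3 + \cdots$, so $B_1 = B_2 = B_3 = 2$. The hypothesis $|B_2| \ge B_1$ of both theorems then reads $2 \ge 2$, so it holds; the $|\gamma_1^2 - \gamma_2^2|$ bound of Theorem~\ref{thm1} evaluates to $\frac{B_1^2}{4} + \frac{B_2^2}{16} = 1 + \frac14 = \frac54$. For the $|\gamma_2^2 - \gamma_3^2|$ bound I would check that $\mu_1 = 2B_2/B_1 = 2$ and $\nu_1 = B_3/B_1 = 1$, so the pair $(\mu_1,\nu_1) = (2,1)$ lies in $D_1$ (indeed $|\mu_1| = 2 \le 2$ and $\nu_1 = 1 \ge 1$), whence Lemma~\ref{Old} applies and Theorem~\ref{thm3} yields $\frac{1}{144}(9B_2^2 + 4B_3^2) = \frac{1}{144}(36 + 16) = \frac{13}{36}$.

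Sharpness is inherited from the extremal function $k_\varphi$ of~\eqref{extS}: with $\varphi(z) = (1+z)/(1-z)$ one computes $k_\varphi(z) = z/(1-iz)^2$, a rotation of the Koebe function, for which $\gamma_1 = i$, $\gamma_2 = -\tfrac12$, $\gamma_3 = -\tfrac{i}{3}$, and hence $|\gamma_1^2 - \gamma_2^2| = \tfrac54$ and $|\gamma_2^2 - \gamma_3^2| = \tfrac{13}{36}$. There is no real obstacle here; the entire proof is an arithmetic substitution. The only point deserving a moment's attention is that the hypothesis $|B_2| \ge B_1$ holds with equality for this $\varphi$, so I would note explicitly that the non-strict inequalities in Theorems~\ref{thm1} and~\ref{thm3} still cover this boundary case, and that $(2,1)$ does indeed sit on the boundary of $D_1$ where Lemma~\ref{Old} remains valid.
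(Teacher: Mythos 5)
Your proposal is correct and follows exactly the paper's route: the paper obtains this corollary by setting $\alpha=0$ in Corollary~\ref{crl6}, which in turn is Theorems~\ref{thm1} and~\ref{thm3} specialized to $\varphi(z)=(1+z)/(1-z)$, i.e.\ $B_1=B_2=B_3=2$, just as you verify. Your added checks --- that $(\mu_1,\nu_1)=(2,1)$ lies (on the boundary of) $D_1$ and that the rotated Koebe function $z/(1-iz)^2$ attains both bounds --- are accurate and consistent with the paper's extremal function $k_\varphi$.
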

\begin{corollary}
   If $f \in \mathcal{C}$, then
    $$ \vert \gamma_1^2 - \gamma_2^2  \vert \leq \frac{5}{16} \;\; \text{and} \;\;  \quad \vert \gamma_2^2 - \gamma_3^2  \vert \leq  \frac{13}{144} .$$
\end{corollary}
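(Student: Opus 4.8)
The plan is to obtain this statement as the specialization $\alpha = 0$ of Corollary~\ref{crl7}. The key observation is that the class $\mathcal{C}$ of convex functions coincides with the class $\mathcal{C}(0)$ of convex functions of order zero; equivalently, taking $A = 1$ and $B = -1$ in $\mathcal{C}[A,B]$ gives $\varphi(z) = (1+z)/(1-z)$, whose image is the right half plane, so $\mathcal{C}[1,-1] = \mathcal{C}$. Thus no new machinery is needed: I would simply substitute $\alpha = 0$ into the two bounds supplied by Corollary~\ref{crl7}.

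Concretely, the first bound $\frac{(\alpha-1)^2(\alpha^2 - 6\alpha + 45)}{144}$ evaluates at $\alpha = 0$ to $\frac{1 \cdot 45}{144} = \frac{45}{144} = \frac{5}{16}$, which yields $\vert \gamma_1^2 - \gamma_2^2 \vert \le 5/16$. Likewise, the second bound $\frac{(\alpha-1)^2(2\alpha^2 - 10\alpha + 13)}{144}$ evaluates at $\alpha = 0$ to $\frac{1 \cdot 13}{144} = \frac{13}{144}$, which yields $\vert \gamma_2^2 - \gamma_3^2 \vert \le 13/144$. These are exactly the two displayed inequalities.

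The only point requiring care is that the hypotheses used to derive Corollary~\ref{crl7} (through Corollary~\ref{crl2}) remain valid at the endpoint $\alpha = 0$. With $A = 1$ and $B = -1$, the Fekete--Szeg\"{o}-type condition $\vert A^2 - 6AB + 5B^2 \vert \ge 4(A - B)$ reads $12 \ge 8$, which holds; and the pair $(\mu_2, \nu_2) = \big((A - 5B)/2,\; B(3B - A)/2\big) = (3, 2)$ must lie in $\cup_{i=1}^3 D_i$. Since $2 \le \vert \mu_2 \vert = 3 \le 4$ and $\nu_2 = 2 \ge \frac{1}{12}(\mu_2^2 + 8) = 17/12$, the pair belongs to $D_2$, so Lemma~\ref{Old} indeed applies at $\alpha = 0$. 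Both hypotheses therefore being satisfied, the displayed bounds follow directly; I expect the entire argument to be a short numerical verification rather than to present any genuine obstacle, the bounds being inherited from the already-established sharp inequalities.
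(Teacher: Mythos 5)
Your proposal is correct and follows exactly the paper's route: the paper obtains this corollary by setting $\alpha=0$ in Corollary~\ref{crl7} (itself the case $A=1-2\alpha$, $B=-1$ of Corollary~\ref{crl2}), which is precisely your substitution. Your explicit check that the hypotheses of Corollary~\ref{crl2} hold at $A=1$, $B=-1$ --- namely $12\ge 8$ and $(\mu_2,\nu_2)=(3,2)\in D_2$ --- is a detail the paper leaves implicit, but it confirms rather than alters the argument.
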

\section{Bounds of $\vert \det T_{3,2}(f)\vert $}
    Ali et al. \cite[Theorem 1]{AliFS} derived the sharp estimates of Fekete-Szeg\"{o} functional for $p-$valent functions belonging to $\mathcal{S}^*(\varphi)$, which for $p=1$ immediately gives the following estimates of $\vert a_4 \vert$.
\begin{lemma}\cite[Theorem 1]{AliFS}\label{lemma3}
    Let $\varphi(z) = 1+ B_1 z +B_2 z^2 +B_3 z^3+ \cdots$, and
    $$q_1= \frac{3 B_1^2 + 4 B_2 }{2 B_1 }, \quad q_2= \frac{B_1^3 + 3 B_1 B_2 + 2 B_3 }{2 B_1 }.$$
    If  $f \in \mathcal{S}^*(\varphi)$ is of the form (\ref{first}) such that $(q_1, q_2) \in \cup_{i=1}^3 D_i$, then
    $$  \vert a_4 \vert \leq  \frac{ B_1^3 + 3 B_1 B_2 + 2 B_3  }{6  }.$$
    The bound is sharp.
\end{lemma}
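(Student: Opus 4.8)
The plan is to feed the coefficient formula for $a_4$ directly into Lemma~\ref{Old}. Recall that for $f \in \mathcal{S}^*(\varphi)$ the expansion (\ref{a4}) gives
\begin{equation*}
a_4 = \frac{1}{48}\big( (8 B_1^3 + 24 B_1 B_2 + 16 B_3) c_1^3 + (24 B_1^2 + 32 B_2) c_1 c_2 + 16 B_1 c_3 \big),
\end{equation*}
where $\omega(z) = \sum_{n=1}^\infty c_n z^n$ is the Schwarz function associated with $f$ in (\ref{phiw}). The first step is to pull the common factor $16 B_1/48 = B_1/3$ out of the bracket, so that the expression assumes the normalized shape $c_3 + \mu c_1 c_2 + \nu c_1^3$ to which the Prokhorov--Szynal-type estimate of Lemma~\ref{Old} applies.

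The second step is the routine but essential bookkeeping of checking that the two resulting coefficients are exactly $q_1$ and $q_2$. Dividing the $c_1 c_2$-coefficient by $16 B_1$ gives $(24 B_1^2 + 32 B_2)/(16 B_1) = (3 B_1^2 + 4 B_2)/(2 B_1) = q_1$, and dividing the $c_1^3$-coefficient by $16 B_1$ gives $(8 B_1^3 + 24 B_1 B_2 + 16 B_3)/(16 B_1) = (B_1^3 + 3 B_1 B_2 + 2 B_3)/(2 B_1) = q_2$. Hence $a_4 = \tfrac{B_1}{3}\,(c_3 + q_1 c_1 c_2 + q_2 c_1^3)$. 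Since by hypothesis $(q_1, q_2) \in \cup_{i=1}^3 D_i$, Lemma~\ref{Old} yields $\vert c_3 + q_1 c_1 c_2 + q_2 c_1^3 \vert \leq \vert q_2 \vert$; moreover on each region $D_i$ one has $q_2 \geq 1 > 0$, so that $\vert q_2 \vert = q_2 = (B_1^3 + 3 B_1 B_2 + 2 B_3)/(2 B_1)$. Multiplying by $B_1/3$ delivers the claimed bound $\vert a_4 \vert \leq (B_1^3 + 3 B_1 B_2 + 2 B_3)/6$.

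For sharpness I would test the function $k_\varphi$ of (\ref{extS}), whose associated Schwarz function is $\omega(z) = i z$, that is, $c_1 = i$ and $c_2 = c_3 = 0$. For this choice $c_3 + q_1 c_1 c_2 + q_2 c_1^3 = q_2\, i^3 = -i q_2$, so that $\vert a_4 \vert = \tfrac{B_1}{3}\vert q_2 \vert$ attains the bound exactly. There is no serious obstacle in this argument: the whole proof reduces to a single application of Lemma~\ref{Old}, and the only point demanding care is the algebraic verification that the normalized coefficients coincide with the prescribed $q_1$ and $q_2$\,---\,which is precisely why the hypothesis is stated in terms of those two quantities.
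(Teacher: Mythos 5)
Your proof is correct. A point worth noting: the paper does not actually prove Lemma~\ref{lemma3} at all --- it imports it as a citation, namely as the $p=1$ case of the Fekete--Szeg\"{o}-type theorem of Ali, Ravichandran and Seenivasagan \cite[Theorem 1]{AliFS}. Your argument is therefore a self-contained replacement for that citation, and it is exactly the technique the paper itself deploys elsewhere: factoring the coefficient formula (\ref{a4}) as $a_4=\tfrac{B_1}{3}\big(c_3+q_1c_1c_2+q_2c_1^3\big)$ and invoking Lemma~\ref{Old} is the same computation the paper performs for the convex analogue in the proof of Theorem~\ref{thm6} (where $a_4=\tfrac{B_1}{12}(c_3+q_1c_1c_2+q_2c_1^3)$ gives $\vert a_4\vert\le (B_1^3+3B_1B_2+2B_3)/24$), and for the functional $\tfrac{a_2^3}{3}-a_2a_3+a_4$ in Theorem~\ref{thm3}. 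Your algebraic bookkeeping is right, and your observation that $q_2\ge 1>0$ on each of $D_1$, $D_2$, $D_3$ (so that $\vert q_2\vert=q_2$ and the stated bound is a genuine nonnegative quantity) is a detail the paper leaves implicit but which the argument genuinely needs. The sharpness check is also consistent with the paper: $k_\varphi$ of (\ref{extS}) has Schwarz function $\omega(z)=iz$, and your value $\vert a_4\vert=\tfrac{B_1}{3}q_2$ matches the paper's computation $a_4=-\tfrac{i}{6}(B_1^3+3B_1B_2+2B_3)$ for $k_\varphi$ in the proof of Theorem~\ref{thm5}.
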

\begin{theorem}\label{thm5}
   Let $\varphi(z) = 1+ B_1 z +B_2 z^2 +B_3 z^3 +\cdots$ such that
\begin{equation*}\label{c1}
     6 B_1^2 \leq B_1 (3 B_1^2 + 2 B_2) \leq B_1^2 + 2 B_1^4 + 3 B_1^2 B_2 + 3 B_2^2 - 2 B_1 B_3,
\end{equation*}
   and
    $$ q_1= \frac{3 B_1^2 + 4 B_2 }{2 B_1 } , \quad q_2= \frac{B_1^3 + 3 B_1 B_2 + 2 B_3 }{2 B_1 }.$$
    If $f \in \mathcal{S}^*(\varphi)$ and  $(q_1, q_2) \in \cup_{i=1}^3 D_i$, then
    $$ \vert T_{3,2}(f)\vert \leq  \bigg(B_1 + \frac{ B_1^3 + 3 B_1 B_2 + 2 B_3 }{6  } \bigg) \bigg(  B_1^2 + \frac{B_1^4}{3} + \frac{B_1^2 B_2}{2} + \frac{B_2^2}{2} - \frac{B_1 B_3}{3}  \bigg).   $$
    The bound is sharp.
\end{theorem}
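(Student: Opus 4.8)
The plan is to expand the determinant and exploit its factorization. By (\ref{intil}) with $m=3$, $n=2$, the matrix $T_{3,2}(f)$ is the symmetric Toeplitz array with diagonal entries $a_2,a_3,a_4$, and a direct cofactor expansion shows
$$T_{3,2}(f) = (a_2 - a_4)(a_2^2 + a_2 a_4 - 2a_3^2).$$
This splits the task into bounding the two factors separately and then checking that a single function saturates both. The first factor is immediate: by the triangle inequality $|a_2 - a_4| \le |a_2| + |a_4|$, and inserting $|a_2| \le B_1$ from (\ref{a2}) together with $|a_4| \le (B_1^3 + 3B_1 B_2 + 2B_3)/6$ from Lemma~\ref{lemma3} (whose hypothesis $(q_1,q_2)\in\cup_{i=1}^3 D_i$ is assumed) produces exactly the first parenthetical factor of the claimed product.

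The real work is the second factor $a_2^2 + a_2 a_4 - 2a_3^2$. Here I would substitute the representations (\ref{a2a3}) and (\ref{a4}) to rewrite it as a polynomial functional $\Phi(c_1,c_2,c_3)$ in the Schwarz coefficients of $\omega$, with real coefficients assembled from $B_1,B_2,B_3$, and then maximize $|\Phi|$ over $\omega\in\Omega$. Since the extremal candidate $k_\varphi$ corresponds to $c_1=\pm i$, $c_2=c_3=0$, the aim is to show the maximum lies on the boundary $|c_1|=1$, where Schwarz's lemma forces $c_2=c_3=0$ and $|\Phi|$ collapses to $|B_1^2 c_1^2 + K c_1^4|$ with $K = -B_1^4/3 - B_1^2 B_2/2 + B_1 B_3/3 - B_2^2/2$; over $|c_1|=1$ this equals $B_1^2 + |K| = B_1^2 + B_1^4/3 + B_1^2 B_2/2 + B_2^2/2 - B_1 B_3/3$, the target value, attained at $c_1^2=-1$. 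The reduction I would carry out is the standard one: fix $t=|c_1|\in[0,1]$, eliminate $c_2,c_3$ via the Schur-type parametrization $c_2=(1-t^2)\zeta$ and the attendant expression for $c_3$, and reduce $|\Phi|$ to a majorant in $t$ and the auxiliary moduli. This is exactly where the two chained inequalities of the hypothesis enter: they are the sign and monotonicity conditions that force $K<0$ and make the one-variable majorant nondecreasing on $[0,1]$, so the maximum occurs at $t=1$. I expect this boundary maximization to be the main obstacle, since $\Phi$ mixes quadratic and quartic terms and the naive term-by-term triangle inequality overshoots the target by $2B_1 B_3/3$ whenever $B_3>0$ (which happens already for the Janowski subfamilies), so a genuine joint estimate is unavoidable.

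Finally, for each fixed $f\in\mathcal{S}^*(\varphi)$ the two factor bounds multiply, giving $|T_{3,2}(f)| = |a_2-a_4|\,|a_2^2+a_2a_4-2a_3^2|$ at most the stated product. Sharpness follows because $k_\varphi$ from (\ref{extS}) saturates both factors simultaneously: a direct computation with $c_1=i$, $c_2=c_3=0$ yields $a_2-a_4 = i(B_1 + (B_1^3+3B_1B_2+2B_3)/6)$ and $a_2^2+a_2a_4-2a_3^2 = -(B_1^2 + B_1^4/3 + B_1^2 B_2/2 + B_2^2/2 - B_1 B_3/3)$, so $|T_{3,2}(k_\varphi)|$ is precisely the right-hand side.
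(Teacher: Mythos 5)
Your skeleton matches the paper exactly: the factorization $T_{3,2}(f)=(a_2-a_4)(a_2^2+a_2a_4-2a_3^2)$, the bound $\vert a_2-a_4\vert\le B_1+(B_1^3+3B_1B_2+2B_3)/6$ via (\ref{a2}) and Lemma~\ref{lemma3}, and the sharpness check with $k_\varphi$ are all the paper's steps, and your computation of the collapsed functional $B_1^2c_1^2+Kc_1^4$ at $c_2=c_3=0$ is correct. The genuine gap is that you never prove the key estimate
\begin{equation*}
\vert a_2^2+a_2a_4-2a_3^2\vert \le B_1^2+\tfrac{1}{3}B_1^4+\tfrac{1}{2}B_1^2B_2+\tfrac{1}{2}B_2^2-\tfrac{1}{3}B_1B_3 ,
\end{equation*}
which is the heart of the theorem. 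What you offer instead is a program: parametrize the Schwarz coefficient body, fix $t=\vert c_1\vert$, and argue that the hypotheses make a one-variable majorant nondecreasing so that the maximum sits at $t=1$. None of this is carried out, and the monotonicity claim is precisely the hard part: the functional mixes $c_1^2$, $c_1^4$, $c_1^2c_2$, $c_2^2$ and $c_1c_3$ terms, the admissible range of $c_3$ depends on both $c_1$ and $c_2$ through the Schur--type constraints, and it is not at all evident (nor verified by you) that the stated chained inequalities on $B_1,B_2,B_3$ are what makes this three-variable maximum migrate to the boundary. You correctly observe that a term-by-term triangle inequality overshoots by $2B_1B_3/3$ and that a joint estimate is unavoidable, and you even flag the boundary reduction as ``the main obstacle'' --- but flagging the obstacle does not discharge it, so the theorem is not proved.

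For comparison, the paper performs that unavoidable joint estimate without any global optimization over $\Omega$. It writes $\omega=(p-1)/(p+1)$ with $p(z)=1+\sum p_nz^n\in\mathcal{P}$, expresses $a_2,a_3,a_4$ in the $p_n$, and groups the second factor as
\begin{equation*}
\frac{B_1^2p_1^2}{4}-\frac{C\,p_1^4}{96}-\frac{B_1^2}{8}p_2^2+\frac{B_1^2}{12}\,p_1\Big(p_3-\mu\,p_1p_2\Big),
\qquad \mu=\frac{3B_1^2-2B_1+2B_2}{4B_1},
\end{equation*}
where $C=B_1^2-3B_1^3+2B_1^4-2B_1B_2+3B_1^2B_2+3B_2^2-2B_1B_3$. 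The two hypothesis inequalities then enter transparently: the second one says exactly $C\ge 0$, so applying $\vert p_n\vert\le 2$ termwise produces the correct final expression, and the first one says exactly $2\mu-1\ge 1$, so Lemma~\ref{lemmaa} (Efraimidis) gives $\vert p_3-\mu p_1p_2\vert\le 2(2\mu-1)$; assembling these yields the displayed bound. So the joint cancellation you identified is captured by a known coefficient inequality for Carath\'{e}odory functions, not by a boundary-maximization argument. To complete your route you would have to either prove your monotonicity claim over the Schwarz body (substantial, and possibly requiring hypotheses different from the stated ones) or switch to the paper's device of passing to $\mathcal{P}$ and invoking Lemma~\ref{lemmaa}; as written, your proposal establishes the first factor and the sharpness but not the second factor.
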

\begin{proof}
    Let $f \in \mathcal{S}^*(\varphi)$ be of the form (\ref{first}). Then from (\ref{phiw}), we have
    $$ z f'(z) = f(z) \varphi(\omega(z)), \quad z\in \mathbb{D}.  $$
    Corresponding to the Schwarz function $\omega$, there exists $p(z) = 1 + \sum_{n=1}^\infty p_n z^n \in \mathcal{P}$ such that $w(z) = (p(z)-1)/(p(z)+1)$. The comparison of identical powers of $z$ using the power series expansions of $f$, $\varphi$ and $p$ yield
\begin{align*}
    a_2 &=  \frac{B_1 p_1}{2},\; a_3 = \frac{1}{8} ( B_1^2 - B_1 + B_2 ) p_1^2 + 2 B_1 p_2)
\end{align*}
   and
   $$ a_4 = \frac{1}{48} \bigg( ( B_1^3 - 3 B_1^2 + 2 B_1 - 4 B_2 + 3 B_1 B_2 + 2 B_3 ) p_1^3 + ( 6 B_1^2 - 8 B_1 + 8 B_2) p_1 p_2 + 8 B_1 p_3 \bigg). $$
   Using these values of $a_2$, $a_3$ and $a_4$ in terms of $p_1$, $p_2$ and $p_3$, it follows that
\begin{align*}
    \vert a_2^2 - 2 a_3^2 + a_2 a_4  \vert  =  \bigg\vert  \frac{B_1^2 p_1^2}{4} &- \frac{(B_1^2 - 3 B_1^3 + 2 B_1^4 - 2 B_1 B_2 + 3 B_1^2 B_2 + 3 B_2^2 - 2 B_1 B_3) p_1^4 }{96} \\
   & - \frac{B_1 ( 3 B_1^2 -2 B_1  + 2 B_2) p_1^2 p_2}{48} - \frac{B_1^2 }{8} p_2^2 + \frac{B_1^2 }{12} p_1 p_3 \bigg\vert.
\end{align*}
   Keeping in mind that $B_1^2 + 2 B_1^4 + 3 B_1^2 B_2 + 3 B_2^2 - 2 B_1 B_3 - B_1 (3 B_1^2 + 2 B_2) \geq  0$ and by applying the bound $\vert p_n \vert \leq 2$, $n \in \mathbb{N}$ (see~\cite[Page- 41]{Duren}), we get
\begin{align*}
    \vert a_2^2 - 2 a_3^2 + a_2 a_4  \vert  \leq \frac{3 B_1^2}{2}  &+ \frac{(B_1^2 - 3 B_1^3 + 2 B_1^4 - 2 B_1 B_2 + 3 B_1^2 B_2 + 3 B_2^2 - 2 B_1 B_3)  }{6}  \\
   &  + \frac{B_1^2 }{6} \bigg\vert p_3 - \bigg( \frac{3 B_1^2 -2 B_1  + 2 B_2}{4 B_1} \bigg) p_1 p_2 \bigg\vert.
\end{align*}
   Since $3 B_1^2 + 6 B_2 \geq 6 B_1$, therefore from Lemma \ref{lemmaa}, we obtain
\begin{equation}\label{a2,a3,a4}
   \vert a_2^2 - 2 a_3^2 + a_2 a_4  \vert \leq B_1^2 + \frac{B_1^4}{3} + \frac{B_1^2 B_2}{2} + \frac{B_2^2}{2} - \frac{B_1 B_3}{3}.
\end{equation}
     Further, we have $\vert a_2 - a_4 \vert \leq \vert a_2 \vert + \vert a_4 \vert$. Using the bounds of $\vert a_2 \vert $ and $\vert a_4\vert$ from (\ref{a2}) and Lemma \ref{lemma3} respectively, we get
    $$ \vert a_2 - a_4 \vert \leq B_1 + \frac{ B_1^3 + 3 B_1 B_2 + 2 B_3  }{6  }.$$
     From (\ref{intil}), a simple computation reveals that
\begin{equation}\label{a2,a4}
    \vert T_{3,2}(f)\vert = \vert (a_2 - a_4) (a_2^2 - 2 a_3^2 + a_2 a_4 ) \vert.
\end{equation}
   The required estimated is determined by putting the bounds given in (\ref{a2,a3,a4}) and $(\ref{a2,a4})$ in the above equation.

   The function $k_\varphi$ defined by (\ref{extS}) plays the role of extremal functions. As for this function, we have
   $$  a_2 = i B_1, \quad a_3 =- \frac{1}{2}(B_1^2 + B_2) , \quad a_4 = - \frac{i}{6} (B_1^3 + 3 B_1 B_2 + 2 B_3)$$
   and
\begin{align*}
     \vert T_{3,2}(k_\phi) \vert = \bigg(B_1 + \frac{ B_1^3 + 3 B_1 B_2 + 2 B_3  }{6  } \bigg) \bigg(  B_1^2 + \frac{B_1^4}{3} + \frac{B_1^2 B_2}{2} + \frac{B_2^2}{2} - \frac{B_1 B_3}{3}  \bigg)
\end{align*}
  proving the sharpness.
\end{proof}
\begin{theorem}\label{thm6}
   Let $\varphi(z) = 1 + B_1 z + B_2 z^2 + B_3 z^3 +\cdots$ such that
\begin{equation}\label{26}
    16 B_1^2 - 4 B_1 B_2 \leq 7 B_1^3 \leq 5 B_1^4 + 2 B_1^2 - 4 B_1 B_2 + 7 B_1^2 B_2 + 8 B_2^2 - 6 B_1 B_3 ,
\end{equation}
    and
   $$  q_1 = \frac{3 B_1^2 + 4 B_2}{2 B_1}, \quad q_2= \frac{B_1^3 + 3 B_1 B_2 + 2 B_3 }{2 B_1}. $$
    If $f \in \mathcal{C}(\varphi)$ and $(q_1, q_2) \in \cup_{i=1}^{3} D_i $, then
   $$ \vert T_{3,2}(f)  \vert \leq \frac{1}{144} \bigg( \frac{B_1}{2} + \frac{B_1^3 + 3 B_1 B_2 + 2 B_3}{24} \bigg) ( 5 B_1^4 + 36 B_1^2  + 7 B_1^2 B_2 + 8 B_2^2 - 6 B_1 B_3). $$
   The bound is sharp.
\end{theorem}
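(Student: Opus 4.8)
The plan is to follow the template of Theorem~\ref{thm5} almost verbatim, replacing the starlike coefficient data by the convex data recorded in (\ref{a2a3Kk}) and (\ref{a4K}). The starting point is the factorisation (\ref{a2,a4}), namely $\vert T_{3,2}(f)\vert = \vert (a_2-a_4)(a_2^2 - 2a_3^2 + a_2 a_4)\vert$, which is valid for every $f\in\mathcal{A}$ and reduces the problem to bounding the two factors separately. The two bracketed quantities in the asserted estimate correspond exactly to these two factors, so the task is to show $\vert a_2 - a_4\vert \le \tfrac{B_1}{2}+\tfrac{B_1^3+3B_1B_2+2B_3}{24}$ and $\vert a_2^2 - 2a_3^2 + a_2 a_4\vert \le \tfrac{1}{144}(5B_1^4+36B_1^2+7B_1^2B_2+8B_2^2-6B_1B_3)$.

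For the first factor I would use $\vert a_2 - a_4\vert \le \vert a_2\vert + \vert a_4\vert$, take $\vert a_2\vert \le B_1/2$ from (\ref{a2K}), and obtain $\vert a_4\vert$ from the observation that $g(z)=zf'(z)\in\mathcal{S}^*(\varphi)$ whenever $f\in\mathcal{C}(\varphi)$, since $zg'(z)/g(z)=1+zf''(z)/f'(z)$. Writing $g(z)=z+\sum_{n\ge2}b_nz^n$ we have $b_n=na_n$, so $a_4=b_4/4$; because the pair $(q_1,q_2)$ in the hypothesis is precisely the pair appearing in Lemma~\ref{lemma3}, that lemma applied to $g$ gives $\vert b_4\vert\le (B_1^3+3B_1B_2+2B_3)/6$ and hence $\vert a_4\vert\le (B_1^3+3B_1B_2+2B_3)/24$. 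This immediately produces the first bracket.

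The second factor is the heart of the argument. Passing to the Carath\'eodory representation $\omega=(p-1)/(p+1)$ with $p=1+\sum p_nz^n\in\mathcal{P}$, I would substitute the resulting expressions for $p_1,p_2,p_3$ into (\ref{a2a3Kk})--(\ref{a4K}) and expand $a_2^2-2a_3^2+a_2a_4$ as a polynomial in $p_1,p_2,p_3$, exactly as was done for the starlike case. The right-hand inequality in (\ref{26}) is engineered so that the coefficient of $p_1^4$ is nonnegative; applying $\vert p_n\vert\le2$ to the pure $p_1^4$, $p_1^2p_2$ and $p_2^2$ terms then leaves a constant plus a multiple of a single expression of the form $\vert p_3-\mu p_1p_2\vert$. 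The left-hand inequality in (\ref{26}) is what guarantees $\vert 2\mu-1\vert\ge1$, so that Lemma~\ref{lemmaa} collapses $2\max\{1,\vert2\mu-1\vert\}$ to the value needed and yields the desired bound on $\vert a_2^2-2a_3^2+a_2a_4\vert$.

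Multiplying the two bounds gives the stated estimate, and sharpness is verified on the function $h_\varphi$ of (\ref{exth}): since $zh_\varphi'$ is the extremal starlike function $k_\varphi$, one has $a_2=iB_1/2$, $a_3=-\tfrac16(B_1^2+B_2)$ and $a_4=-\tfrac{i}{24}(B_1^3+3B_1B_2+2B_3)$, and substituting these into (\ref{a2,a4}) reproduces the right-hand side. The main obstacle I anticipate is purely the bookkeeping in the second factor: carrying out the quartic-in-$p$ expansion correctly and confirming that the two halves of condition (\ref{26}) line up precisely with the sign requirement of the $\vert p_n\vert\le2$ step and with the regime $\vert2\mu-1\vert\ge1$ of Lemma~\ref{lemmaa}.
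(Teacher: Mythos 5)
Your proposal follows the paper's proof essentially step for step: the same factorization $\vert T_{3,2}(f)\vert = \vert (a_2-a_4)(a_2^2-2a_3^2+a_2a_4)\vert$, the same Carath\'eodory expansion of the second factor in which the two halves of (\ref{26}) play exactly the roles you describe (nonnegativity of the $p_1^4$ coefficient, and the regime $\vert 2\mu-1\vert\ge 1$ so that Lemma~\ref{lemmaa} gives $2(2\mu-1)$), and the same extremal function $h_\varphi$ with the same coefficient values. The only (cosmetic) difference is your bound on $\vert a_4\vert$: you apply Lemma~\ref{lemma3} to $zf'\in\mathcal{S}^*(\varphi)$ via Alexander's relation, whereas the paper rewrites $a_4=\tfrac{B_1}{12}\left(c_3+q_1c_1c_2+q_2c_1^3\right)$ in Schwarz coefficients and invokes Lemma~\ref{Old} directly; both routes need the same hypothesis $(q_1,q_2)\in\cup_{i=1}^3 D_i$ and yield $\vert a_4\vert\le (B_1^3+3B_1B_2+2B_3)/24$.
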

\begin{proof}
    Suppose $f \in \mathcal{C}(\varphi)$ be of the form (\ref{first}), then we have
    $$ f'(z) + z f''(z) = f'(z) \varphi(\omega(z)) . $$
    Corresponding to the Schwarz function $\omega(z)= \sum_{n=1}^\infty c_n z^n$, there exists $p(z) = 1+ \sum_{n=1}^\infty p_n z^n \in \mathcal{P}$ such that $w(z)= (p(z)-1)/(p(z)+1)$. The comparison of same powers of $z$ in the above equation after the series expansions yield that
\begin{align*}
     a_2 = \frac{B_1 p_1}{4} , \quad  a_3 = \frac{1}{24} (( B_1^2 -B_1 + B_2) p_1^2 + 2 B_1 p_2)
\end{align*}
   and
\begin{equation}\label{a4p}
    a_4 = \frac{1}{192} \bigg(( B_1^3 - 3 B_1^2  + 2 B_1  - 4 B_2 + 3 B_1 B_2 + 2 B_3) p_1^3 + (
      6 B_1^2 + 8 B_2 -8 B_1 ) p_1 p_2 + 8 B_1 p_3 \bigg).
\end{equation}
   Using these expressions for $a_2$, $a_3$ and $a_4$ in terms of the coefficients $p_1$, $p_2$ and $p_3$, a simple computation gives
\begin{align*}
     \vert a_2^2 - 2 a_3^2   & + a_2 a_4 \vert
     = \bigg\vert \frac{1}{2304} \bigg( (2 B_1^2 - 7 B_1^3 + 5 B_1^4 - 4 B_1 B_2 + 7 B_1^2 B_2 + 8 B_2^2 - 6 B_1 B_3 ) p_1^4 \\
   &+ 32 B_1^2 p_2^2 -144 B_1^2 p_1^2 - 24 B_1^2 p_1 \bigg( p_3 - \frac{( 14 B_1^3 -8 B_1^2 + 8 B_1 B_2 )}{24 B_1^2 } p_1 p_2  \bigg) \bigg) \bigg\vert .
\end{align*}
   In view of the hypothesis $2 B_1^2  + 5 B_1^4 - 4 B_1 B_2 + 7 B_1^2 B_2 + 8 B_2^2 - 6 B_1 B_3 \geq 7 B_1^3$ and by applying the bound $\vert p_n \vert \leq 2$ ($n \in \mathbb{N})$, we get
\begin{align*}
     \vert a_2^2 - 2 a_3^2  + a_2 a_4 \vert & \leq  \frac{1}{2304} \bigg( 16 (2 B_1^2 - 7 B_1^3 + 5 B_1^4 - 4 B_1 B_2 + 7 B_1^2 B_2 + 8 B_2^2 - 6 B_1 B_3 ) \\
    & + 128 B_1^2  + 576 B_1^2  + 48 B_1^2  \bigg( \bigg\vert p_3 - \frac{( 14 B_1^3 -8 B_1^2 + 8 B_1 B_2 )}{24 B_1^2 } p_1 p_2  \bigg\vert \bigg) \bigg) .
\end{align*}
    Since $ 7 B_1^2  + 4 B_2 \geq 16 B_1 $ holds, therefore from Lemma \ref{lemma3}, it follows that
\begin{equation}\label{a2a3a4k}
     \vert a_2^2 - 2 a_3^2 + a_2 a_4 \vert \leq \frac{1}{144}( 36 B_1^2 + 5 B_1^4 + 7 B_1^2 B_2 + 8 B_2^2 - 6 B_1 B_3).
\end{equation}
    Now, we only need to maximize $\vert a_2 - a_4 \vert$ for $f\in \mathcal{C}(\varphi)$.
     By the one to one correspondence between the class $\mathcal{P}$ and the class of Schwarz functions, the coefficients $a_4$ in (\ref{a4p}) can be expressed as
     $$ a_4 = \frac{1}{12} B_1 ( c_3  +  q_1 c_1 c_2 +  q_2 c_1^3  ) ,$$
     where $q_1 = (3 B_1^2 + 4 B_2) /(2 B_1)$ and $q_2 = (B_1^3 + 3 B_1 B_2 + 2 B_3) /(2 B_1)$. As by the hypothesis $(q_1, q_2) \in \cup_{i=1}^3 D_i$, from Lemma \ref{Old}, we obtain
\begin{equation}\label{a4kk}
      \vert a_4 \vert \leq  \frac{B_1^3 + 3 B_1 B_2 + 2 B_3}{24 }.
\end{equation}
     Employing the bounds of $\vert a_2 \vert$ and $\vert a_4 \vert$ from (\ref{a2K}) and (\ref{a4kk}) respectively, we get
\begin{equation}\label{a2a4K}
     \vert a_2 - a_4 \vert \leq \vert a_2 \vert +\vert a_4\vert  \leq \frac{B_1}{2} +  \frac{B_1^3 + 3 B_1 B_2 + 2 B_3}{24 }.
\end{equation}
    Thus, applying the bounds of   $\vert a_2^2 - 2 a_3^2  + a_2 a_4 \vert$ and $\vert a_2 - a_4 \vert$  from (\ref{a2a3a4k}) and (\ref{a2a4K})  respectively in (\ref{a2,a4}), we get the desired result.

    The result is sharp for the function $h_\varphi$ defined in (\ref{exth}). As for this function, we have
    $ a_2 = i B_1/2 ,$ $a_3 = - (B_1^2 + B_2)/6 ,$  $a_4 = - i (B_1^3 + 3 B_1 B_2 + 2 B_3)/24 $ and
    $$ \vert T_{3,2}(f) \vert=  \frac{1}{144} \bigg( \frac{B_1}{2} + \frac{B_1^3 + 3 B_1 B_2 + 2 B_3}{24} \bigg) ( 5 B_1^4 + 36 B_1^2  + 7 B_1^2 B_2 + 8 B_2^2 - 6 B_1 B_3) $$
    proving the sharpness of the bound.
\end{proof}
\subsection{Special Cases}
    For the classes $\mathcal{S}^*[A,B]$ and $C[A,B]$, we have $\varphi(z)=(1+ A z)/(1+ B z)$ and the series expansion gives $B_1= A-B$, $B_2 = B^2 -A B$ and $B_3=A B^2 - B^3$. Hence, we deduce the following results immediately from Theorem \ref{thm5} and Theorem \ref{thm6}.
\begin{corollary}\label{crl4}
    For $-1  \leq B < A \leq 1$, let
    $$ 6 (A - B)^2 \leq (3 A - 5 B) (A - B)^2  \leq  (A - B)^2 ( 2 A^2 - 7 A B + 6 B^2 +1),  $$
    and
    $$ q_1= \frac{3 A - 7 B}{2}, \quad q_2= \frac{A^2 - 5 A B + 6 B^2}{2}.$$
    If $f \in \mathcal{S}^*[A,B]$ and  $(q_1, q_2) \in \cup_{i=1}^3 D_i$, then
    $$ \vert T_{3,2}(f) \vert \leq \frac{1}{36} (A - B)^2 ( 2 A^2 - 7 A B + 6 B^2 +6  ) (  A^3 +6 A - 6 B -6 A^2 B + 11 A B^2 - 6 B^3)  .$$
    The estimates is sharp.
\end{corollary}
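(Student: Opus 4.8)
The plan is to obtain Corollary~\ref{crl4} as a direct specialization of Theorem~\ref{thm5} to the Janowski kernel $\varphi(z) = (1+Az)/(1+Bz)$, for which $\mathcal{S}^*[A,B] = \mathcal{S}^*(\varphi)$. The first step is to read off its Taylor coefficients, namely $B_1 = A-B$, $B_2 = B^2 - AB = -B(A-B)$, and $B_3 = AB^2 - B^3 = B^2(A-B)$, each of which carries a common factor of $(A-B) = B_1 > 0$. This common factor is the organizing principle of the entire computation, so I would keep it displayed rather than expanding everything into monomials in $A$ and $B$; doing so keeps the degrees honest and prevents sign slips in $B_2$ and $B_3$.

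Next I would verify that the two hypotheses of Theorem~\ref{thm5} translate into the conditions stated in the corollary. For the chain inequality, substituting the $B_i$ gives $6B_1^2 = 6(A-B)^2$ and $B_1(3B_1^2 + 2B_2) = (A-B)^2(3A - 5B)$, while the right endpoint, using the identity $2(A-B)^2 - 3B(A-B) + B^2 = 2A^2 - 7AB + 6B^2$, collapses to $(A-B)^2(2A^2 - 7AB + 6B^2 + 1)$, matching the corollary exactly. Similarly, cancelling one factor of $(A-B)$ in the numerators of $q_1$ and $q_2$ yields $q_1 = (3A - 7B)/2$ and $q_2 = (A^2 - 5AB + 6B^2)/2$, so the membership requirement $(q_1,q_2) \in \cup_{i=1}^{3} D_i$ is carried over unchanged in form.

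The final step is to evaluate the two factors of the bound in Theorem~\ref{thm5}. The key algebraic identity is $B_1^3 + 3B_1 B_2 + 2B_3 = (A-B)(A^2 - 5AB + 6B^2)$, so the first factor equals $(A-B)(A^2 - 5AB + 6B^2 + 6)/6$; expanding $(A-B)(A^2 - 5AB + 6B^2) = A^3 - 6A^2 B + 11AB^2 - 6B^3$ then recovers the cubic $A^3 + 6A - 6B - 6A^2 B + 11AB^2 - 6B^3$. For the second factor I would collect the common $(A-B)^2/6$, reducing the bracket again to $6 + 2A^2 - 7AB + 6B^2$; multiplying the two factors produces the $1/36$ prefactor and the stated product. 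Sharpness is inherited verbatim from Theorem~\ref{thm5} via the same extremal function $k_\varphi$. The one genuine obstacle here is purely bookkeeping: since $B_2$ and $B_3$ both vanish when $B=0$, I must track the common factor $(A-B)$ consistently through every substitution so that the final cubic and quadratic factors carry the correct degree and sign.
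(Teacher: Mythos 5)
Your proposal is correct and follows exactly the paper's own route: the paper obtains Corollary~\ref{crl4} by substituting $B_1 = A-B$, $B_2 = B^2-AB$, $B_3 = AB^2-B^3$ into Theorem~\ref{thm5}, which is precisely your specialization, and your algebraic identities (e.g.\ $B_1^3+3B_1B_2+2B_3 = (A-B)(A^2-5AB+6B^2)$ and the reduction of the second factor to $(A-B)^2(2A^2-7AB+6B^2+6)/6$) all check out. The only difference is that you spell out the bookkeeping the paper leaves implicit behind the word ``immediately.''
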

\begin{corollary}\label{crl5}
    For  $-1  \leq B < A \leq 1$, let
     $$ 4 (A - B)^2 (4 + B) \leq 7 (A - B)^3 \leq (A - B)^2 (2 + 5 A^2 + 4 B - 17 A B + 14 B^2)  $$
     and
     $$  q_1= \frac{3 A - 7 B}{2}, \quad q_2= \frac{A^2 - 5 A B + 6 B^2}{2}.$$
     If $f\in \mathcal{C}[A,B]$ and $(q_1, q_2) \in \cup_{i=1}^3 D_i$, then
     $$ \vert T_{3,2}(f) \vert \leq \frac{1}{3456} (A - B)^2 ( 5 A^2 - 17 A B + 14 B^2 +36 ) ( A^3 +12 A - 12 B - 6 A^2 B + 11 A B^2 - 6 B^3).$$
     The estimates is sharp.
\end{corollary}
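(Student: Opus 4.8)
The plan is to specialize Theorem~\ref{thm6} to the Janowski weight $\varphi(z)=(1+Az)/(1+Bz)$, so that the corollary reduces to a purely algebraic verification. First I would record the coefficient identities
\begin{equation*}
   B_1 = A-B, \qquad B_2 = B^2-AB = -B(A-B), \qquad B_3 = AB^2-B^3 = B^2(A-B),
\end{equation*}
read off from the series expansion of $(1+Az)/(1+Bz)$, and note that each carries a common factor $(A-B)$; this is what keeps the subsequent manipulations tractable and is the source of the $(A-B)^2$ and $(A-B)^3$ factors in the final bound.

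Next I would check that the hypotheses of Theorem~\ref{thm6} specialize to those stated in the corollary. Substituting the identities above gives $3B_1^2+4B_2=(A-B)(3A-7B)$ and $B_1^3+3B_1B_2+2B_3=(A-B)(A^2-5AB+6B^2)$, so after cancelling the factor $2B_1=2(A-B)$ one gets $q_1=(3A-7B)/2$ and $q_2=(A^2-5AB+6B^2)/2$, matching the stated values. For the chain~(\ref{26}) the same substitutions yield $16B_1^2-4B_1B_2=4(A-B)^2(4+B)$, $7B_1^3=7(A-B)^3$, and, after extracting $(A-B)^2$, $5B_1^4+2B_1^2-4B_1B_2+7B_1^2B_2+8B_2^2-6B_1B_3=(A-B)^2(2+5A^2+4B-17AB+14B^2)$, so that~(\ref{26}) becomes exactly the displayed hypothesis of the corollary.

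Finally I would evaluate the two polynomial blocks in the bound of Theorem~\ref{thm6}. A direct computation, again pulling out the common factor, gives
\begin{equation*}
   \frac{B_1}{2}+\frac{B_1^3+3B_1B_2+2B_3}{24}=\frac{(A-B)\,(A^2-5AB+6B^2+12)}{24}
\end{equation*}
and
\begin{equation*}
   5B_1^4+36B_1^2+7B_1^2B_2+8B_2^2-6B_1B_3=(A-B)^2\,(5A^2-17AB+14B^2+36).
\end{equation*}
Multiplying these, together with the prefactor $1/144$, produces the constant $1/3456=1/(144\cdot 24)$ and the factor $(A-B)^3(A^2-5AB+6B^2+12)(5A^2-17AB+14B^2+36)$. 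Using the cubic identity $A^3-6A^2B+11AB^2-6B^3=(A-B)(A^2-5AB+6B^2)$ to rewrite $(A-B)(A^2-5AB+6B^2+12)=A^3+12A-12B-6A^2B+11AB^2-6B^3$ then yields precisely the stated expression, and sharpness is inherited from the extremal function $h_\varphi$ of Theorem~\ref{thm6}. The only real work is the bookkeeping: the recurring factor $(A-B)$ and the two factorizations $A^2-5AB+6B^2=(A-2B)(A-3B)$ and the cubic identity above are what reconcile the compact form of the theorem with the expanded form recorded in the corollary.
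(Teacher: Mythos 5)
Your proposal is correct and is exactly the paper's argument: the paper also obtains this corollary by substituting the Janowski coefficients $B_1=A-B$, $B_2=B^2-AB$, $B_3=AB^2-B^3$ into Theorem~\ref{thm6}, with the hypotheses and the bound translating precisely as in your computation (the paper simply omits the algebraic bookkeeping you spell out). All of your identities check out, including the key rewriting $(A-B)(A^2-5AB+6B^2+12)=A^3+12A-12B-6A^2B+11AB^2-6B^3$ that reconciles the two forms of the bound.
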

    When $A= 1- 2\alpha$ and $B=-1$, the conditions in Corollary \ref{crl4} and \ref{crl5} are true and $(q_1, q_2) \in D_3$  for $\alpha \in [0,1/7]$. Thus, we obtain the following bounds for the classes $\mathcal{S}^*(\alpha)$ and $\mathcal{C}(\alpha)$.
\begin{corollary}
    If $f\in \mathcal{S}^*(\alpha)$, then
    $$ \vert T_{3,2}(f) \vert \leq  \frac{4}{9} (1 - \alpha)^3 ( 16 \alpha^4 - 100 \alpha^3 + 268 \alpha^2 - 345 \alpha + 189) $$
     for $\alpha \in [0,1/7]$.
     The bound is sharp.
\end{corollary}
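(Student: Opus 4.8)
The plan is to derive this as the specialization $A = 1 - 2\alpha$, $B = -1$ of Corollary~\ref{crl4} (itself the Janowski case of Theorem~\ref{thm5}), the only genuine work being to confirm that the hypotheses of that corollary persist after the substitution on the stated range. First I would record the data generated by this choice: $A - B = 2(1-\alpha)$, so $(A-B)^2 = 4(1-\alpha)^2$, together with $q_1 = (3A - 7B)/2 = 5 - 3\alpha$ and $q_2 = (A^2 - 5AB + 6B^2)/2 = 2\alpha^2 - 7\alpha + 6$.

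To check the two chain hypotheses of Corollary~\ref{crl4}, I would divide them by the positive factor $(A-B)^2$, reducing them to $6 \le 3A - 5B \le 2A^2 - 7AB + 6B^2 + 1$. The substitution gives $3A - 5B = 8 - 6\alpha$ and $2A^2 - 7AB + 6B^2 + 1 = 8\alpha^2 - 22\alpha + 16$, so the left inequality is $6 \le 8 - 6\alpha$, i.e.\ $\alpha \le 1/3$, and the right inequality rearranges to $8(\alpha - 1)^2 \ge 0$; both hold throughout $[0, 1/7]$.

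Next I would verify the membership $(q_1, q_2) \in \cup_{i=1}^3 D_i$ needed to apply Lemma~\ref{Old} inside Theorem~\ref{thm5}. On $[0, 1/7]$ we have $q_1 = 5 - 3\alpha \in [32/7, 5]$, so $|q_1| \ge 4$ and only $D_3$ is relevant; its condition $q_2 \ge \tfrac{2}{3}(|q_1| - 1)$ becomes $6\alpha^2 - 15\alpha + 10 \ge 0$ after clearing denominators, and this holds for all $\alpha$ since its discriminant $225 - 240$ is negative. Hence all hypotheses of Corollary~\ref{crl4} are in force on $[0, 1/7]$.

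Finally the estimate follows by substitution into the bound of Corollary~\ref{crl4}. Writing that bound in the equivalent factored form $\tfrac{1}{36}(A-B)^3(A^2 - 5AB + 6B^2 + 6)(2A^2 - 7AB + 6B^2 + 6)$, the three factors become $8(1-\alpha)^3$, $2(2\alpha^2 - 7\alpha + 9)$ and $8\alpha^2 - 22\alpha + 21$; the numerical constant collapses to $\tfrac{1}{36}\cdot 8 \cdot 2 = \tfrac{4}{9}$, and expanding $(2\alpha^2 - 7\alpha + 9)(8\alpha^2 - 22\alpha + 21) = 16\alpha^4 - 100\alpha^3 + 268\alpha^2 - 345\alpha + 189$ gives exactly the claimed bound. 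Sharpness is inherited from the extremal function $k_\varphi$ of (\ref{extS}) taken with $\varphi(z) = (1 + (1-2\alpha)z)/(1 - z)$. The computation is routine substitution and polynomial arithmetic; the only step requiring any care is placing $(q_1, q_2)$ in $D_3$, which rests on the negative-discriminant check for $6\alpha^2 - 15\alpha + 10$.
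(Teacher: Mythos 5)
Your proposal is correct and follows exactly the paper's route: the paper obtains this corollary by substituting $A=1-2\alpha$, $B=-1$ into Corollary~\ref{crl4} (the Janowski specialization of Theorem~\ref{thm5}), checking that its chain condition holds and that $(q_1,q_2)\in D_3$ on $[0,1/7]$, which is precisely what you verify, with the algebra (including the factored form of the bound and the final expansion) all checking out. Your verification in fact shows the starlike conditions persist up to $\alpha\le 1/3$; the tighter endpoint $1/7$ in the paper's statement comes only from the companion convex corollary, so restricting to $[0,1/7]$ is harmless here.
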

\begin{corollary}
      If $f\in \mathcal{C}(\alpha)$, then
    $$ \vert T_{3,2}(f) \vert \leq  \frac{1}{108} (1 - \alpha)^3 ( 20 \alpha^4  -124 \alpha^3 + 381 \alpha^2 - 576 \alpha  + 432 ) $$
    for $\alpha \in [0,1/7]$.
    The bound is sharp.
\end{corollary}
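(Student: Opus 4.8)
The plan is to obtain this corollary as the special case $A = 1-2\alpha$, $B = -1$ of Corollary~\ref{crl5}, which already records the bound of $\vert T_{3,2}(f)\vert$ for the Janowski convex class $\mathcal{C}[A,B]$. Since $\mathcal{C}(\alpha) = \mathcal{C}[1-2\alpha,-1]$, essentially all that remains is to (i) check that the hypotheses of Corollary~\ref{crl5} survive this substitution on the stated range $\alpha \in [0,1/7]$, and (ii) simplify the resulting polynomial. First I would record the elementary reductions $A - B = 2(1-\alpha)$ and $4 + B = 3$, together with the values $q_1 = 5 - 3\alpha$ and $q_2 = 6 - 7\alpha + 2\alpha^2$ coming from $q_1 = (3A-7B)/2$ and $q_2 = (A^2 - 5AB + 6B^2)/2$.

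Next I would verify the two-sided hypothesis $4(A-B)^2(4+B) \le 7(A-B)^3 \le (A-B)^2(2 + 5A^2 + 4B - 17AB + 14B^2)$. After cancelling the positive factor $(1-\alpha)^2$, the left inequality collapses to $48 \le 56(1-\alpha)$, i.e.\ exactly $\alpha \le 1/7$; this is precisely where the range of $\alpha$ enters. The right inequality reduces to $0 \le 20(1-\alpha)^2$, which is automatic. I would likewise confirm $(q_1,q_2) \in D_3$: one has $q_1 = 5 - 3\alpha \ge 4$ on $[0,1/7]$, while the condition $q_2 \ge \tfrac{2}{3}(q_1 - 1)$ reduces to $6\alpha^2 - 15\alpha + 10 \ge 0$, whose discriminant is negative, so it holds for every $\alpha$. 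Hence all hypotheses of Corollary~\ref{crl5} are met on $[0,1/7]$.

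With the hypotheses in hand, the bound follows by substituting $A = 1-2\alpha$, $B = -1$ into the right-hand side of Corollary~\ref{crl5} and collecting terms. I would compute $5A^2 - 17AB + 14B^2 + 36 = 20\alpha^2 - 54\alpha + 72$ and, crucially, $A^3 + 12A - 12B - 6A^2B + 11AB^2 - 6B^3 = 4(1-\alpha)(2\alpha^2 - 7\alpha + 12)$, the key observation being that this cubic factor carries an extra power of $(1-\alpha)$. Multiplying against $(A-B)^2 = 4(1-\alpha)^2$ produces a total of $(1-\alpha)^3$; the constant $1/3456$ then absorbs the numerical factors, and regrouping the remaining quartic gives $\tfrac{1}{108}(1-\alpha)^3(20\alpha^4 - 124\alpha^3 + 381\alpha^2 - 576\alpha + 432)$.

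Finally, sharpness is inherited directly: the extremal function for Theorem~\ref{thm6}, and hence for Corollary~\ref{crl5}, is $h_\varphi$ defined in~(\ref{exth}); taking $\varphi(z) = (1 + (1-2\alpha)z)/(1-z)$ yields the extremal member of $\mathcal{C}(\alpha)$ realizing equality. The only genuinely delicate point is spotting the factorization of the cubic that supplies the third power of $(1-\alpha)$ and matches the claimed form; everything else is routine verification of the defining inequalities and polynomial bookkeeping.
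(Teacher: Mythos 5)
Your proposal is correct and follows exactly the paper's route: the paper obtains this corollary by setting $A=1-2\alpha$, $B=-1$ in Corollary~\ref{crl5} and simply asserting that its hypotheses and $(q_1,q_2)\in D_3$ hold for $\alpha\in[0,1/7]$, which is precisely what you verify (your reductions $48\le 56(1-\alpha)$, $0\le 20(1-\alpha)^2$, $6\alpha^2-15\alpha+10\ge 0$, and the factorization $4(1-\alpha)(2\alpha^2-7\alpha+12)$ all check out). In fact your write-up supplies the computational details the paper leaves implicit, but it is the same argument.
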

\begin{remark}
    In particular, when $\alpha=0$,  the following bounds for the classes $\mathcal{S}^*$ and $\mathcal{C}$ follow as special case proved in~\cite{AliW}.
\begin{enumerate}[(i)]
  \item If $f\in \mathcal{S}^*$, then $\vert T_{3,2}(f) \vert \leq 84$~\cite[Theorem 2.3]{AliW}.
  \item If $f\in \mathcal{C}$, then $\vert T_{3,2}(f) \vert \leq 4$~\cite[Theorem 2.11]{AliW}.
\end{enumerate}
\end{remark}
     In case of $\varphi(z) =((1+z)/(1-z))^{\beta}$, $0 < \beta \leq 1$, the classes $\mathcal{S}^*(\varphi)$ and $\mathcal{C}(\varphi)$ reduce to the class of strongly starlike functions of order $\beta$ and the class of strongly convex functions of order $\beta$, denoted by $\mathcal{SS}^*(\beta)$ and $\mathcal{CC}(\beta)$ respectively (see~\cite{Duren}).
\begin{corollary}
  If $f\in \mathcal{SS}^*(\beta)$, then
  $$  \vert T_{3,2}(f) \vert \leq \frac{4}{81} \beta^3 (160 + 742 \beta^2 + 799 \beta^4) $$
  for $\beta \in [3/4,1]$. The bound is sharp.
\end{corollary}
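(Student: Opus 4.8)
The plan is to obtain this corollary as a direct specialization of Theorem~\ref{thm5} to the Ma--Minda function $\varphi(z)=\big((1+z)/(1-z)\big)^{\beta}$, so that the whole argument reduces to three steps: computing the first three Taylor coefficients $B_1,B_2,B_3$; checking that both hypotheses of Theorem~\ref{thm5} hold throughout $\beta\in[3/4,1]$; and substituting into the closed-form bound and simplifying.

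First I would compute the coefficients through the logarithm, since
\[
\log\varphi(z)=\beta\log\frac{1+z}{1-z}=2\beta\Big(z+\tfrac{1}{3}z^{3}+\cdots\Big).
\]
Exponentiating and collecting terms up to order $z^{3}$ gives $\varphi(z)=1+2\beta z+2\beta^{2}z^{2}+\tfrac{2\beta}{3}(1+2\beta^{2})z^{3}+\cdots$, so that $B_1=2\beta$, $B_2=2\beta^{2}$, and $B_3=\tfrac{2\beta}{3}(1+2\beta^{2})$.

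Next comes the verification of the hypotheses, which I expect to be the main obstacle and also the place where the restriction $\beta\in[3/4,1]$ is forced. Substituting the coefficients, the left inequality of the chain in Theorem~\ref{thm5} becomes $24\beta^{2}\le 32\beta^{3}$, i.e.\ exactly $\beta\ge 3/4$, while the right inequality reduces to $47\beta^{2}-24\beta+1\ge 0$, whose larger root lies below $3/4$, so it holds automatically on $[3/4,1]$. For the second hypothesis I would compute $q_1=(3B_1^2+4B_2)/(2B_1)=5\beta$ and $q_2=(B_1^3+3B_1B_2+2B_3)/(2B_1)=(17\beta^2+1)/3$, and then split on the size of $|q_1|$. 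For $\beta\in[3/4,4/5]$ one has $|q_1|=5\beta\in[15/4,4]\subset[2,4]$, and the requirement $q_2\ge\tfrac{1}{12}(q_1^{2}+8)$ reduces to $43\beta^{2}\ge 4$, so $(q_1,q_2)\in D_2$; for $\beta\in[4/5,1]$ one has $|q_1|\ge 4$, and $q_2\ge\tfrac{2}{3}(|q_1|-1)$ reduces to $17\beta^{2}-10\beta+3\ge 0$, which has negative discriminant and therefore always holds, so $(q_1,q_2)\in D_3$. Hence $(q_1,q_2)\in\cup_{i=1}^{3}D_i$ for every $\beta\in[3/4,1]$.

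Finally, with both hypotheses in force, I would evaluate the bound of Theorem~\ref{thm5} directly. The first factor $B_1+\tfrac16(B_1^3+3B_1B_2+2B_3)$ collapses to $\tfrac{2\beta}{9}(10+17\beta^{2})$, and the second factor $B_1^2+\tfrac{B_1^4}{3}+\tfrac{B_1^2B_2}{2}+\tfrac{B_2^2}{2}-\tfrac{B_1B_3}{3}$ collapses to $\tfrac{2\beta^{2}}{9}(16+47\beta^{2})$; multiplying and expanding $(10+17\beta^{2})(16+47\beta^{2})=160+742\beta^{2}+799\beta^{4}$ yields the asserted estimate $\tfrac{4}{81}\beta^{3}(160+742\beta^{2}+799\beta^{4})$. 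Sharpness is inherited from Theorem~\ref{thm5}, the extremal function being $k_\varphi$ of \eqref{extS} for this choice of $\varphi$. The only genuinely delicate point is the case split governing the $D_i$ membership; everything else is mechanical substitution.
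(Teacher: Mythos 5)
Your proposal is correct and is precisely the derivation the paper intends: the corollary is stated as a direct specialization of Theorem~\ref{thm5} to $\varphi(z)=((1+z)/(1-z))^{\beta}$, and your computed values $B_1=2\beta$, $B_2=2\beta^{2}$, $B_3=\tfrac{2\beta}{3}(1+2\beta^{2})$, the reduction of the hypothesis chain to $\beta\ge 3/4$ and $47\beta^{2}-24\beta+1\ge 0$, the $D_2$/$D_3$ case split for $(q_1,q_2)=(5\beta,(17\beta^{2}+1)/3)$, and the final factorization $\tfrac{2\beta}{9}(10+17\beta^{2})\cdot\tfrac{2\beta^{2}}{9}(16+47\beta^{2})$ all check out. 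The paper gives no explicit proof of this corollary, so your write-up actually supplies the verification (including the $D_i$ membership, which the paper leaves implicit) in exactly the intended way.
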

\begin{corollary}
   If $f \in \mathcal{CC}(\beta)$, then
   $$ \vert T_{3,2}(f) \vert \leq \frac{1}{324} \beta^3 (323 + 650 \beta^2 + 323 \beta^4)$$
   for $\beta \in [8/9,1]$. The bound is sharp.
\end{corollary}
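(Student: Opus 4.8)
The plan is to specialize Theorem~\ref{thm6} to the choice $\varphi(z) = ((1+z)/(1-z))^{\beta}$ that defines $\mathcal{CC}(\beta)$, and to verify that the two hypotheses of that theorem hold precisely on $[8/9,1]$, so that the general bound collapses to the stated polynomial.

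First I would extract the first three Taylor coefficients of $\varphi$. Writing $\log\varphi(z) = \beta\log\frac{1+z}{1-z} = 2\beta\bigl(z + \tfrac13 z^3 + \cdots\bigr)$ and exponentiating, I would read off
$$ B_1 = 2\beta, \quad B_2 = 2\beta^2, \quad B_3 = \tfrac{2\beta}{3}\bigl(1 + 2\beta^2\bigr). $$
The only mildly delicate computation is $B_3$, which collects the linear term of $\log\varphi$ together with the cubic contribution of $\tfrac16(\log\varphi)^3$.

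Next I would check the hypotheses of Theorem~\ref{thm6}. Substituting the $B_i$ into the chain (\ref{26}), the left inequality $16B_1^2 - 4B_1 B_2 \le 7 B_1^3$ becomes $64\beta^2 - 16\beta^3 \le 56\beta^3$, i.e.\ $\beta \ge 8/9$, while the right inequality simplifies to $72\beta^3 \le 152\beta^4$, i.e.\ $\beta \ge 9/19$; hence (\ref{26}) holds exactly for $\beta\in[8/9,1]$, and this is where the stated range originates. For the Prokhorov-type membership I would compute $q_1 = (3B_1^2 + 4B_2)/(2B_1) = 5\beta$ and $q_2 = (B_1^3 + 3B_1 B_2 + 2B_3)/(2B_1) = (17\beta^2+1)/3$. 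Since $\beta \ge 8/9$ gives $q_1 = 5\beta \ge 40/9 > 4$, the pair $(q_1,q_2)$ is a candidate for $D_3$, and the requirement $q_2 \ge \tfrac23(q_1-1)$ reduces to $17\beta^2 - 10\beta + 3 \ge 0$, which holds for every $\beta$ since its discriminant $100-204$ is negative. Thus $(q_1,q_2)\in D_3 \subset \cup_{i=1}^3 D_i$ throughout $[8/9,1]$.

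With both hypotheses verified, Theorem~\ref{thm6} applies, and it remains to simplify its right-hand side. I would factor the two brackets separately: the first becomes $\frac{B_1}{2} + \frac{B_1^3+3B_1B_2+2B_3}{24} = \frac{\beta(17\beta^2+19)}{18}$, and the second, $5B_1^4 + 36B_1^2 + 7B_1^2 B_2 + 8B_2^2 - 6B_1 B_3$, collapses to $8\beta^2(19\beta^2+17)$. Multiplying by $1/144$ and using $(17\beta^2+19)(19\beta^2+17) = 323\beta^4 + 650\beta^2 + 323$ yields exactly $\frac{1}{324}\beta^3(323 + 650\beta^2 + 323\beta^4)$, as claimed. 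Sharpness is inherited directly from Theorem~\ref{thm6}, the extremal function being $h_\varphi$ defined by $1 + z h_\varphi''(z)/h_\varphi'(z) = \varphi(iz)$. The bulk of the argument is purely computational; the only genuine checkpoint, and the main thing to get right, is confirming that the left half of (\ref{26}) collapses to the clean threshold $\beta\ge 8/9$, which is precisely what pins down the admissible range.
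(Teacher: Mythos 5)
Your proposal is correct and follows exactly the paper's route: the corollary is obtained by specializing Theorem~\ref{thm6} to $\varphi(z)=((1+z)/(1-z))^{\beta}$, with $B_1=2\beta$, $B_2=2\beta^2$, $B_3=\tfrac{2\beta}{3}(1+2\beta^2)$, verifying condition~(\ref{26}) (whose left half pins down $\beta\ge 8/9$) and the membership $(q_1,q_2)=(5\beta,(17\beta^2+1)/3)\in D_3$, and then simplifying the general bound, with sharpness inherited from $h_\varphi$. All of your computations check out, including the factorization $(17\beta^2+19)(19\beta^2+17)=323\beta^4+650\beta^2+323$ that yields the stated constant $\tfrac{1}{324}$.
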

  For $-1/2 < \lambda \leq 1 $ and $f\in \mathcal{A}$ such that $f$ is a locally univalent functions, Robertson~\cite{Robert} considered the class
   $$ \mathcal{F}(\lambda) = \bigg\{ f \in \mathcal{A} : \RE \bigg(1 + \frac{z f''(z)}{f'(z)} \bigg) > \frac{1}{2} - \lambda \bigg\}. $$
   Clearly, when $1/2 \leq \lambda\leq 1$, functions in $\mathcal{F}(\lambda)$ are close-to-convex~\cite{Kaplan}. For $ - 1/2 <  \lambda \leq 1/2$, the functions in $\mathcal{F}(\lambda)$ are convex. Vasudevarao et al.~\cite{Vasu} derived the sharp bound of $\vert T_{3,2}(f)\vert$ for $f\in \mathcal{F}(\lambda)$ when $1/2 \leq \lambda\leq 1$, that is the class of Ozaki close-to-convex functions. Consider
   $$ \varphi_\lambda(z) = \frac{1+ 2 \lambda z}{1-z},  \quad z \in \mathbb{D}.$$
   The function $\varphi_\lambda$ maps the unit disk onto the right half plane  for $- 1/2 <  \lambda \leq 1/2 $ such that $\RE \varphi_\lambda > (1/2- \lambda)$. Clearly, $\mathcal{C}(\varphi_\lambda) \subset \mathcal{F}(\lambda)$ for $\lambda \in (-1/2 ,1]$ and $\mathcal{C}(\varphi_\lambda) = \mathcal{F}(\lambda)$ when $\lambda \in (-1/2, 1/2]$.
   The Taylor's series expansion of $\varphi_\lambda$ gives $B_1 = B_2 = B_3 = (1 + 2 \lambda)$, which satisfy the condition~(\ref{26}) for $\lambda \in [5/14,1/2]$.
   Thus, from Theorem~\ref{thm6}, we obtain the following sharp bound of $\vert T_{3,2}(f)\vert$ for the class $\mathcal{F}(\lambda)$ when $5/14 \leq \lambda \leq 1/2$.
\begin{corollary}\label{crlr}
   If $f\in \mathcal{F}(\lambda)$, $5/14 \leq \lambda \leq 1/2$, then
   $$ \vert T_{3,2}(f) \vert \leq  \frac{1}{864} (1 + 2 \alpha)^3 (9 + 5 \alpha + 2 \alpha^2) (25 + 17 \alpha + 10 \alpha^2). $$
\end{corollary}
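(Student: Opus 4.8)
The plan is to recognize this corollary as a direct specialization of Theorem~\ref{thm6} to the Ma--Minda function $\varphi_\lambda(z)=(1+2\lambda z)/(1-z)$, whose identification with $\mathcal{F}(\lambda)$ on the range $\lambda\in(-1/2,1/2]$ has already been recorded above. First I would write down the Taylor coefficients of $\varphi_\lambda$: since $1/(1-z)=\sum_{n\ge 0}z^n$, multiplying by $1+2\lambda z$ gives $B_n=1+2\lambda$ for every $n\ge 1$, so in particular $B_1=B_2=B_3=1+2\lambda=:B$, a positive quantity for $\lambda>-1/2$. Because $5/14\le\lambda\le 1/2$ lies inside $(-1/2,1/2]$, we have $\mathcal{C}(\varphi_\lambda)=\mathcal{F}(\lambda)$, so every $f\in\mathcal{F}(\lambda)$ in this range is covered by Theorem~\ref{thm6} as soon as its two hypotheses are verified.

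The second step is to check those hypotheses under $B_1=B_2=B_3=B$. Substituting into inequality~(\ref{26}), both the middle and right-hand expressions collapse: the left inequality $16B^2-4B^2\le 7B^3$ becomes $12B^2\le 7B^3$, i.e. $B\ge 12/7$, which is exactly $\lambda\ge 5/14$; and the right inequality becomes $7B^3\le 5B^4+7B^3$, i.e. $0\le 5B^4$, which always holds. For the second hypothesis I would compute $q_1=(3B^2+4B)/(2B)=(3B+4)/2$ and $q_2=(B^3+3B^2+2B)/(2B)=(B+1)(B+2)/2$. Over the range $B\in[12/7,2]$ one checks $q_1\ge 4$, so membership in $\cup_{i=1}^3 D_i$ is governed by $D_3$, and the required inequality $q_2\ge\tfrac{2}{3}(q_1-1)$ reduces, after clearing denominators, to $3B^2+3B+2\ge 0$, which is trivially true. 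Hence $(q_1,q_2)\in D_3$ and both hypotheses of Theorem~\ref{thm6} are satisfied.

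The final step is purely algebraic substitution into the bound of Theorem~\ref{thm6}. With $B_1=B_2=B_3=B$ the first factor simplifies to $B(B^2+3B+14)/24$ and the polynomial factor to $B^2(5B^2+7B+38)$, so the product equals $B^3(B^2+3B+14)(5B^2+7B+38)/3456$. Writing $B=1+2\lambda$, the two quadratics each shed a common factor $2$, namely $B^2+3B+14=2(2\lambda^2+5\lambda+9)$ and $5B^2+7B+38=2(10\lambda^2+17\lambda+25)$, and since $4/3456=1/864$ this yields the stated estimate (with $\lambda$ in place of $\alpha$ in the displayed bound). I expect no genuine obstacle here: the only point demanding care is the bookkeeping in the two polynomial simplifications, where it is easy to mis-track the constant terms, together with confirming that the reduced form of~(\ref{26}) pins the lower endpoint precisely at $\lambda=5/14$.
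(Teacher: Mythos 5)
Your proposal is correct and takes essentially the same route as the paper: the paper obtains the corollary by specializing Theorem~\ref{thm6} to $\varphi_\lambda$ with $B_1=B_2=B_3=1+2\lambda$, observing that condition~(\ref{26}) holds exactly for $\lambda\in[5/14,1/2]$. Your write-up is in fact more complete than the paper's, which leaves implicit both the verification that $(q_1,q_2)\in D_3$ and the final algebraic simplification (and, as you note, the displayed bound's $\alpha$ should read $\lambda$).
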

\begin{remark}
        Vasudevarao et al.~\cite[Theorem 4.3]{Vasu} proved the same bound as given in Corollary~\ref{crlr} for $1/2 \leq \lambda \leq 1$. Thus, Corollary~\ref{crlr} shows that the result is also true for $5/14 \leq \lambda \leq 1/2 $.
\end{remark}
\section*{Declarations}
\subsection*{Funding}
The work of the Surya Giri is supported by University Grant Commission, New-Delhi, India  under UGC-Ref. No. 1112/(CSIR-UGC NET JUNE 2019).
\subsection*{Conflict of interest}
	The authors declare that they have no conflict of interest.
\subsection*{Author Contribution}
    Each author contributed equally to the research and preparation of manuscript.
\subsection*{Data Availability} Not Applicable.
\noindent

\end{document}